\DeclareMathOperator{\SC}{SC}
\DeclareMathOperator{\PS}{PS}
\DeclareMathOperator{\ST}{St}
\DeclareMathOperator{\SP}{St}
 \DeclareMathOperator{\vol}{vol}
\DeclareMathOperator{\Types}{Types}
 \DeclareMathOperator{\JL}{JL}
\numberwithin{equation}{section}
\begin{document}

\title{Hilbert modular forms with prescribed ramification}
\author{Jared Weinstein}
\email{jared@math.ucla.edu}
\address{UCLA Department of Mathematics,  Box 951555
Los Angeles, CA 90095-1555 USA}


\begin{abstract}
Let $K$ be a totally real field. We present an asymptotic formula
for the number of Hilbert modular cusp forms $f$ with given
ramification at every place $v$ of $K$. When $v$ is an infinite
place, this means specifying the weight of $f$ at $k$, and when $v$
is finite, this means specifying the restriction to inertia of the
local Weil-Deligne representation attached to $f$ at $v$. Our
formula shows that with essentially finitely many exceptions, the
cusp forms of $K$ exhibit every possible sort of ramification
behavior.
\end{abstract}

\maketitle

\vspace*{6pt}

\section{Introduction and Main Theorem}

Let $K$ be a totally real field.  Our investigation is concerned
with counting the number of cuspidal automorphic representations
$\pi=\otimes_v \pi_v$ of the adele group $\GL(2,\mathbf{A}_K)$ whose
local components $\pi_v$ have prescribed ramification for all places
$v$ of $K$.  We must explain what is meant by ``prescribed
ramification":  When $v$ is an infinite place, it means that $\pi_v$
is a prescribed essentially discrete series representation of $\GL(2,\R)$   When $v$ is a finite place, it means that the Weil-Deligne
representation associated to $\pi_v$ under the local Langlands
correspondence has prescribed restriction to inertia and monodromy operator.

Our problem may be restated in terms of the $\ell$-adic Galois representations attached to Hilbert modular eigenforms over $K$.  Suppose $f$ is such a form with coefficients in $\overline{\Q}_\ell$.   Then there is a corresponding $\ell$-adic Galois representation $\rho_f\from \Gal(\overline{K}/K)\to\GL(2,\overline{\Q}_\ell)$, as in~\cite{TaylorGaloisReps}.  Fix an $\ell$ and suppose the following data are given:

\begin{enumerate}
\item For each infinite place $v$, a weight $k_v\geq 2$, and
\item For each finite place $v$ not dividing $\ell$, a representation $\sigma_v\from I_{K_v}\to \GL(2,\overline{\Q}_\ell)$ of the inertia group $I_{K_v}$ which extends to the full Galois group $\Gal(\overline{K}_v/K_v)$.  Assume almost all of these are trivial.
\end{enumerate}

We are then concerned with counting the number of Hilbert modular forms $f$ of weights $(k_v)$ and level prime to $\ell$ for which the restriction of $\rho_f$ to $I_{K_v}$ is $\sigma_v$ for each finite place $v$.   Our main Theorem~\ref{maintheorem} gives an asymptotic formula for the number of such forms.  Barring a natural obstruction coming from the central character, it shows that such forms always exist unless the data $(k_v)$ and $(\sigma_v)$ given above belong to a finite set of exceptional data up to twisting.

This theorem affirms the existence, at least for $\GL(2)$, of automorphic representations subject to local constrains which are more stringent than those previously considered in the literature. Clozel~\cite{ClozelLimitMultiplicities}, building on a result of DeGeorge and Wallach~\cite{DeGeorgeWallach}, begins with a real semisimple Lie group $G$ and a discrete series representation $\delta$ of $G$ and then uses a trace formula to count the multiplicity of $\delta$ in $L^2(\Gamma\backslash G)$ as $\Gamma$ ranges through a tower of arithmetic subgroups of $G$:  This is akin to counting the number of modular forms of a given weight $k$ whose level is supported on a finite set of primes, though it may be deeply ramified at those primes.   Chenevier~\cite{Chenevier} uses the Peter-Weyl theorem to construct automorphic representations of a certain sort of unitary group (which is compact at the infinite places) which has prescribed ramification at a prime but arbitrary behavior at infinity;  this representation is then used to construct number fields with given ramification.  Our present investigation is limited to the group $\GL(2)$, but we control both the weight and the ramification at all finite places.  In this sense our main theorem is similar to a theorem of Khare and Prasad~\cite{khareprasad}, which shows the existence of classical cuspidal eigenforms of weight 2 for the principal congruence subgroup $\Gamma(p)$ which have a particular sort of ramification behavior at one prime $p$.  In Section~\ref{Q} we offer some very detailed information in the case of $K=\Q$;  in particular, we compute the class of the space of cusp forms $S_k(\Gamma(N))$ in the Grothendieck group of $\SL(2,\Z/N\Z)$, thus generalizing the classical dimension formulas in~\cite{Shimura:Automorphic} and~\cite{CohenOesterle}.

To state our main theorem, we need to introduce some notation regarding ``inertial types" for $GL(2)$ over a local or global field.

\subsection{Local Inertial Types.}  Let $F$ be a finite extension of $\Q_p$, with ring of integers
$\OO_F$. Let $\mathcal{A}_2(F)$ be the set of isomorphism classes of
complex-valued irreducible admissible representations of $\GL(2,F)$.
By the local Langlands correspondence, there is a bijection
$\pi\mapsto \sigma(\pi)$ mapping $\mathcal{A}_2(F)$ onto the set of
isomorphism classes of two-dimensional Frobenius-semisimple Weil-Deligne representations
of $F$ preserving $L$-functions and epsilon factors, see for instance~\cite{Kutzko:1}.  (See~\cite{tate:ntb} for the definition of Weil-Deligne representation and for the construction of its $L$-function and epsilon factor.)

Let $\OO_F$ be the ring of integers of $F$, so that $\GL(2,\OO_F)$ is a maximal compact open subgroup of $\GL(2,F)$.  In~\cite{Henniart:types} it
is shown that if $\pi\in\mathcal{A}_2(F)$, then
$\pi\vert_{\GL(2,\OO_F)}$ contains an irreducible finite-dimensional
subspace $\tau(\pi)$ of $\GL(2,\OO_F)$ which characterizes the restriction of
$\sigma(\pi)$ to the inertia group of $F$.  We shall call $\tau(\pi)$ the inertial type of $\pi$.
We will leave the precise definition of $\tau(\pi)$ for Section~\ref{types}, but we remark that when
$\pi$ belongs to the unramified principal series, $\tau(\pi)$ is the
trivial representation. Let $\Types(F)$ be the set of isomorphism
classes of representations of $\GL(2,\OO_F)$ which arise as inertial types
for members of $\mathcal{A}_2(F)$.  For $\tau\in\Types(F)$ we define
the quantity
$$d(\tau)=\begin{cases}
q-1,&\text{ $\tau$ is the type of a special representation,}\\
\dim \tau,&\text{ otherwise;}\end{cases}$$  here $q$ is the
cardinality of the residue field of $\OO_F$.   (A special representation of $\GL(2,F)$ is a twist of the Steinberg representation of this group.  Weil-Deligne representations corresponding to special representations are precisely those with nontrivial monodromy operator.)

Now suppose $F=\R$.  When $k\geq 2$ and $w$ are two integers of the same parity, let $\mathcal{D}_{k,w}$ be the essentially discrete series representation of $\GL(2,\R)$ as in 0.2 of~\cite{Carayol:ladicreps2}.  Then the central character of $\mathcal{D}_{k,w}$ is $t\mapsto t^{-w}$.  Let $\Types(F)$ denote the set of all such representations $\mathcal{D}_{k,w}$.  If $\pi$ is such a representation we simply define $\tau(\pi)=\pi$.  We define the function $d$ on $\Types(F)$ by $d(\mathcal{D}_{k,w})=k-1$.

In either case, suppose $\tau\in\Types(F)$.  When $\chi$ is a (one-dimensional) character of $F^*$ we denote by $\tau\otimes\chi$ the representation $g\mapsto \chi(\det g)\tau(g)$;  this also belongs to $\Types(F)$.

\subsection{Global Inertial Types.}  Now suppose $K$ is a totally real field of degree $n$.
Let $S$ (resp., $S_f$, $S_\infty$) be the set of places (resp., finite places, infinite places) of $K$.  To a cuspidal automorphic
representation $\pi$ of $\GL(2)_K$ arising from a Hilbert modular
form, we can associate the representation $\tau(\pi)=\otimes_{v\in S}\tau(\pi_v)$ of $\GL(2,\hat{\OO}_K\times (K\otimes\R))$.  Loosely speaking, $\tau(\pi)$  measures the ramification of $\pi$ at the finite places and  records the components of $\pi$ at the infinite places.  If the collection of infinite places is denoted $\set{v_1,\dots, v_n}$ and if $\pi_{v_i}\isom \mathcal{D}_{k_i,w_i}$, then  $w_1=\dots=w_n$ and the integers $k_i$ and $w_i$ all have the same parity.  Such a representation $\pi$ arises from a Hilbert modular form of weights $(k_1,\dots,k_d)$, {\em cf}.~\cite{Ohta}

We have that $\tau(\pi_v)$ is the trivial representation for all finite places not dividing the level of $\pi$.  Furthermore, the central character $\chi$ of $\pi$ is an algebraic Hecke character of $\mathbf{A}_K^*$ whose restriction to $\OO_{K,v}^*$ (resp., $K_v^*$) equals the central character of $\tau(\pi_v)$ for all $v\in S_f$ (resp., $v\in S_\infty$).  In light of this we define the
set $\Types(K)$ of {\em global inertial types} to consist of the collections $\tau=(\tau_v)_{v\in
S}$ satisfying the conditions:

\begin{enumerate}
\item For all but finitely many $v$, $\tau_v$ is the trivial
representation.

\item There exists an algebraic Hecke character of $\mathbf{A}_K$ whose component at each
place $v$ agrees with the central character of $\tau_v$.  (If one
exists, then there are as many as the class number of $F$.)
\end{enumerate}

We remark that condition (2) is equivalent to the condition that if
$\chi_v$ is the central character of $\tau_v$, then
\begin{equation}
\label{condition2} \chi=\prod_v\chi_v\text{ vanishes on $\OO_K^*$.}
\end{equation}
Indeed, $\chi$ is a character of the subgroup $\prod_{v\in S_f }\OO_{K_v}^*\times\prod_{v\in S_\infty}K_v^*$ of $\mathbf{A}_K^*$, and for $\chi$ to extend to an algebraic Hecke character it is necessary and sufficient that $\chi$ vanish on the intersection of this subgroup with the diagonally embedded group $K^*\subset\mathbf{A}_K^*$.  This intersection is exactly the unit group $\OO_K^*$.

If $\tau\in\Types(K)$ is a global inertial type, we shall write $\tau=\tau_f\otimes\tau_\infty$ to denote the decomposition of $\tau$ into its finite and infinite components.  Note that $\tau_f$ is a finite-dimensional representation of the compact group $\GL(2,\hat{\OO}_F)$ and $\tau_\infty$ is a representation of $\GL(2,K\otimes\R)$.  If we write $\tau_{v_i}\isom D_{k_i,w_i}$ for the infinite places $v_i$, then Eq.~\ref{condition2} and the Dirichlet Unit Theorem imply that $w_1=\dots=w_n$.

Whenever $\pi$ is a cuspidal automorphic representation of
$\GL(2)_K$ arising from a Hilbert modular form, $\tau(\pi)$ belongs
to $\Types(K)$.  For any $\tau\in\Types(K)$, we define
$$d(\tau)=\prod_v d(\tau_v),$$ the product making sense because all but finitely many factors are 1.

The notion of a global type $\tau$ being a twist of another type $\tau'$ is evident:  this shall mean that for each finite (resp. infinite) place $v$ there exist characters $\chi_v$ of $\OO_v^*$ (resp. $K_v^*$) such that $\tau_v=\tau_v'\otimes\chi_v$ for all places.  (This can only be so if $\prod_v\chi_v^2$ vanishes on $\OO_K^*$.)

\subsection{Main Theorem}

Let $S(\tau)$ denote the set of Hilbert modular forms $\pi$ for
which $\tau(\pi)=\tau$. Our main theorem is an estimate for the
cardinality of $S(\tau)$.

\begin{Theorem}
\label{maintheorem} We have $$\#
S(\tau)=2^{1-n}\abs{\zeta_K(-1)}hd(\tau)+O\left(2^{\nu(\tau)}\right),$$
where
\begin{eqnarray*}
\zeta_K(s)&=&\text{ the Dedekind zeta function for $K$}\\
h&=&\text{ the class number of $K$}\\
\nu(\tau)&=&\text{ the number of finite places $v$ for which
$\dim \tau_v>1$. }
\end{eqnarray*}
The constant in the ``$O$" only depends on the field $K$.
\end{Theorem}

Note that $\zeta_K(-1)$ is a nonzero (in fact rational) number.  By comparing the quantity $d(\tau)$ with the error term $2^{\nu(\tau)}$ (see Section~\ref{typesdef}), we will deduce the following

\begin{Cor}  \label{maincor} Up to twisting by one-dimensional characters, the set of global inertial types $\tau\in\Types(K)$ for which $S(\tau)=\emptyset$ is finite.
\end{Cor}

Stated rather loosely, this means that there always exists a Hilbert
modular form with prescribed ramification data, so long as:  the
desired weight is large enough at one of the infinite places, OR the
desired inertial representation at finite place $v$ is ramified
deeply enough, OR enough primes are permitted to ramify.  See
Section~\ref{Q} for an explicit account of the case $K=\Q$.



We have found it most convenient to divide the proof of
Theorem~\ref{maintheorem} into two cases, depending on the parity of
$n=[K:\Q]$.  If $n$ is even, we work with the definite quaternion
algebra $D$ ramified exactly at the infinite places. If $n$ is odd,
we work with the quaternion algebra $D$ ramified at all but one of
the infinite places.  In each case, we wish to compute the
multiplicity of a global type in a space of automorphic forms on a
Shimura variety corresponding to $D$.  The dimension of the Shimura
variety will be 0 or 1 as $n$ is even or odd, respectively.

\section{Types for $\GL(2)$}
\label{types}

\subsection{Definition of Types}
\label{typesdef}
Let $F$ be a $p$-adic field with ring of integers $\OO_F$, maximal
ideal $\gp_F$, and residue field $\mathbf{F}_q$.  In this section we
gather the necessary definitions and facts concerning types for
$\GL(2,F)$.

We must first define the association $\pi\mapsto\tau(\pi)$ attaching
types to objects of $\mathcal{A}_2(F)$.  We do this by cases as
follows.  It will simplify matters to assume in the following
discussion that $\pi$ is of minimal conductor among its twists by
characters of $F^*$.   Types for $\pi$ not satisfying this condition
can be defined via the relation
$\tau(\pi\otimes\chi)=\tau(\pi)\otimes\chi\vert_{\OO_F^*}$.

\begin{enumerate}
\item
If $\pi$ belongs to the unramified principal series then $\tau(\pi)$ is the trivial representation of $\GL(2,\OO_F)$.
\item If $\pi$ is the principal series representation corresponding to a pair of characters $\chi_1,\chi_2$ of $F^*$ with $\chi_1\chi_2^{-1}$ ramified, twist $\pi$ so as to assume that $\chi_2$ is trivial.  Let $\gp^{\mathfrak{c}}$ be the conductor of $\chi_1$.  Then $\tau(\pi)$ is induced from the character
$\tbt{a}{b}{c}{d}\mapsto\chi_1(a)$ of
$$\Gamma_0(\gp^{\mathfrak{c}}):=\set{\tbt{a}{b}{c}{d}\in\GL(2,\OO_F)\biggm{\vert} c\equiv
0\pmod{\gp^{\mathfrak{c}}}}.$$  By~\cite{Casselman}, Prop. 1, $\tau(\pi)$ is an irreducible representation of $\GL(2,\OO_F)$.  We then have $d(\pi)=\dim \tau(\pi)=q^{c-1}(q+1)$.
\item If $\pi=\St$ is the Steinberg representation of $\GL(2,F)$, then $\tau(\pi)$ is pulled back from the unique irreducible $q$-dimensional
representation $\St_{\GL(2,\OO_F)}$ of $\GL(2,\mathbf{F}_q)$
contained in the permutation representation on
$\mathbf{P}^1(\mathbf{F}_q)$.  In this case we have defined $d(\tau(\pi))=q-1$.
\item If $\pi$ is supercuspidal, then it is induced from a finite-dimensional irreducible representation $\lambda$ of a compact-mod-center subgroup $J\subset\GL(2,F)$.  These were constructed in~\cite{KutzkoSupercuspidal} and ~\cite{KutzkoSupercuspidal2};  we review the construction in Section~\ref{tracebounds}.
By replacing $J$ with a conjugate we may assume that the maximal compact subgroup
$J^0$ of $J$ lies in $\GL(2,\OO_F)$;  then
$\tau(\pi)=\Ind_{J^0}^{\GL(2,\OO_F)}\lambda\vert_{J^0}$ is irreducible;  see~\cite{Henniart:types}, A.3.1.    When $\sigma(\pi)$ is induced from a character $\theta$ of a quadratic extension $F'/F$ of conductor $c\geq 1$, we will see in the next section that the dimension of $\tau(\pi)$ is given by $$\dim\tau(\pi)=\begin{cases} q^{c-1}(q-1),&\text{ $F'/F$ unramified,}\\ q^{c-2}(q^2-1),&\text{ $F'/F$
 ramified.}   \end{cases}$$
\end{enumerate}

We shall call a type $\tau(\pi)$ principal series (resp., special, supercuspidal) when $\pi$ is principal series (resp., special, supercuspidal). From~\cite{Henniart:types}, A.1.5 we deduce the theorem:

\begin{Theorem}
\label{typetheorem} Let $\pi,\pi'\in\mathcal{A}_2(F)$.  The
following are equivalent:
\begin{enumerate}
\item $\pi'\vert_{\GL(2,\OO_F)}$ contains $\tau(\pi)$.
\item $\sigma(\pi)\vert_{I_F}\isom\sigma(\pi)\vert'_{I_F}$ {\em or} $\pi=\chi\otimes\St$ and $\pi'$ is
the principal series representation attached to two unequal
characters of $F^*$ whose restriction to $\OO_F^*$ agrees with
$\chi\vert_{\OO_F^*}$.
\end{enumerate}
\end{Theorem}

Therefore if $\pi,\pi'\in\mathcal{A}_2(F)$ and $\pi'$ contains
$\tau(\pi)$, then $\tau(\pi')=\tau(\pi)$, unless we are in the case
of the ``or" clause above, in which case $\tau(\pi)$ is a
one-dimensional character $\chi$ and
$\tau(\pi')=\chi\otimes\St_{\GL(2,\OO_F)}$.

A word on the dimensions of types is in order.  Suppose $\tau$ runs through a sequence of irreducible representations of $\GL(2,\OO_F)$.  If we assume that no member of this sequence is the twist of any other by a one-dimensional character, then we must have $\dim\tau\to\infty$.   Of course this implies $d(\tau)\to\infty$ as well.  Furthermore, an irreducible representation of $\GL(2,\OO_F)$ of least dimension other than 1 is one that is inflated from a cuspidal representation of $\GL(2,\mathbf{F}_q)$, and this has dimension $q-1$.  Therefore we have the lower bound $d(\tau)\geq q-1$ whenever $\tau$ is not one-dimensional.

We shall now deduce Cor.~\ref{maincor} from Thm.~\ref{maintheorem}.   Let $K$ be a totally real field and suppose that $\tau$ runs through a sequence of global inertial types of $K$, no two of which are twists of each other.   To prove the corollary it will suffice to show that the sequence $d(\tau)/2^{\nu(\tau)}$ tends to infinity.  Assume instead that it is bounded.  By the previous paragraph we have the lower bound
$$d(\tau)/2^{\nu(\tau)} \geq \prod_v \frac{q_v-1}{2},$$
where the product runs over the finite places $v$ for which $\tau_v$ is not one-dimensional.   Since the left hand side of this is bounded, the global types $\tau$ can only have higher-dimensional components at a finite set $S_0$ of finite places.  This implies that $\nu(\tau)$ is bounded from above;  by the hypothesis that $d(\tau)/2^{\nu(\tau)}$ is bounded we must have that $d(\tau)$ is bounded as well.  We claim there must exist a place $v_0\in S_0$ for which $\tau_{v_0}$ assumes infinitely many distinct values up to twisting.  The alternative is that for each place $v$ in $S_0$, the sequence $\tau_v$ comprises only finitely many types along with their twists;  by the pigeonhole principle, this would imply that the sequence $\tau$ contains two types which are twists of one another, contradicting the hypothesis.  But then by the previous paragraph $d(\tau_{v_0})\to \infty$, contradicting the boundedness of $d(\tau)$.   The corollary is proved.

\subsection{Trace bounds for types}\label{tracebounds}
Define the {\em level} of a type $\tau$ to be the least integer
$\ell$ for which $\tau$ factors through $\GL(2,\OO_F/\gp^\ell)$.  We
shall say that the {\em essential level} of $\tau$ is the least
level of all the twists of $\ell$.

We need a lemma concerning the non-archimedean local types.

\begin{lemma}
\label{tracebound} Let $g\in\GL(2,\OO_F)$ be a matrix whose characteristic polynomial has discriminant in $\OO_F^*$.   Then for all  $\tau\in\Types(F)$, we have
\begin{equation}
\label{traceineq}
\begin{cases}\abs{\tr(\tau(g))}=1,&\text{$\tau$ one-dimensional or special,}\\
\abs{\tr(\tau(g))}\leq 2,&\text{all other cases}.\end{cases}
\end{equation}
Furthermore, if we relax the hypothesis on $g$ and merely assume that $g$ has distinct eigenvalues in $\overline{F}^*$, then $\abs{\tr\tau(g)}$ is bounded as $\tau$ ranges through $\Types(F)$.
\end{lemma}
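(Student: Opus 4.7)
The plan is to treat each family of types from Section~\ref{typesdef} separately. The key consequence of the discriminant hypothesis on $g$ is that its reduction $\bar g\in\GL(2,\mathbf{F}_q)$ is regular semisimple, which sharply restricts which cosets can contribute to an induced character. The one-dimensional case is trivial: $\tau(g)=\chi(\det g)$ for a character $\chi$ of $\OO_F^*$, so $\abs{\tr\tau(g)}=1$. If $\tau=\chi\otimes\ST_{\GL(2,\OO_F)}$ is special, then $\tau$ is inflated from $\GL(2,\mathbf{F}_q)$; since the Steinberg representation there equals the permutation representation on $\mathbf{P}^1(\mathbf{F}_q)$ minus the trivial representation, its character on $\bar g$ equals $(\#\text{fix}(\bar g))-1=\pm 1$, depending on whether the eigenvalues of $\bar g$ lie in $\mathbf{F}_q$ or not.

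The principal series and supercuspidal cases both proceed via the Frobenius character formula. For a ramified principal series type $\tau=\Ind_{\Gamma_0(\gp^{\mathfrak{c}})}^{\GL(2,\OO_F)}\tilde\chi$, I would write $\tr\tau(g)$ as a sum over cosets $x\Gamma_0(\gp^{\mathfrak{c}})$ with $x^{-1}gx\in\Gamma_0(\gp^{\mathfrak{c}})$, weighted by $\chi_1$ of the upper-left entry.  These cosets are exactly the fixed points of $g$ on $\GL(2,\OO_F)/\Gamma_0(\gp^{\mathfrak{c}})\isom\mathbf{P}^1(\OO_F/\gp^{\mathfrak{c}})$, and by Hensel applied to the (unit-discriminant) characteristic polynomial of $g$ they lift uniquely from the fixed points of $\bar g$ on $\mathbf{P}^1(\mathbf{F}_q)$, of which there are $0$ or $2$; each summand is a root of unity, so the total has absolute value at most $2$.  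For the supercuspidal case $\tau=\Ind_{J^0}^{\GL(2,\OO_F)}\lambda\vert_{J^0}$ the strategy is the same: $J^0$ is built from an embedding of a quadratic extension $E/F$ with $\OO_E^*\subset J^0$ (in the depth-zero case $J^0=\GL(2,\OO_F)$ and $\tau$ is inflated from a cuspidal representation of $\GL(2,\mathbf{F}_q)$, where the bound of $2$ is the standard character bound on regular semisimple elements).  The cosets $xJ^0$ with $x^{-1}gx\in J^0$ are again bounded by Hensel to at most two, each contributing at most $1$ in absolute value because $\lambda\vert_{J^0}$ restricts to a character on the embedded $\OO_E^*$.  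For the ``furthermore'' clause, given $g$ with merely distinct eigenvalues in $\overline{F}^*$, I would choose $c\in F^*$ so that $cg\in\GL(2,\OO_F)$ has unit-discriminant characteristic polynomial; since $\tau(cg)$ differs from $\tau(g)$ only by a central-character twist, the uniform bound just established for $cg$ yields a uniform bound for $g$.

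The hard part will be the supercuspidal case, because the structure of $J^0$, and in particular how the embedded $\OO_E^*$ sits inside it, depends both on whether $E/F$ is ramified and on the conductor of the inducing character.  One must verify that the Hensel-style ``at most two fixed cosets'' count is genuinely uniform in these parameters, which amounts to controlling the $\bar g$-action on the reduction modulo $\gp$ of $\GL(2,\OO_F)/J^0$; this is where the detailed Kutzko construction reviewed in Section~\ref{tracebounds} will have to be invoked.
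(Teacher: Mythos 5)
Your treatment of the main inequality follows the same route as the paper: the one-dimensional and special cases by direct character computation, and the principal series and supercuspidal cases by the Mackey/Frobenius formula for induced characters combined with a count of fixed cosets, which the unit-discriminant hypothesis (via Hensel) forces to be $0$ or $2$. That part is sound, modulo the supercuspidal details you defer (one small inaccuracy there: in the even-conductor unramified case with $n$ even the inducing representation $\eta$ is $q$-dimensional, not a character of $J^0$, though its trace on the relevant regular elements still has absolute value $1$).

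The ``furthermore'' clause, however, has a genuine gap. You propose to choose $c\in F^*$ with $cg\in\GL(2,\OO_F)$ of unit-discriminant characteristic polynomial and then undo a central twist. But $cg\in\GL(2,\OO_F)$ forces $\det(cg)=c^2\det g\in\OO_F^*$, hence $c\in\OO_F^*$, and then the discriminant $c^2(\alpha-\beta)^2$ has exactly the same valuation as before; no rescaling can repair a non-unit discriminant. Moreover, if the reduction worked it would prove the bound $\abs{\tr\tau(g)}\leq 2$ for all such $g$, which is false: for $g$ with $\alpha\equiv\beta\pmod{\gp}$ but $\alpha\neq\beta$, the number of fixed points of $g$ on $\mathbf{P}^1(\OO_F/\gp^{\mathfrak{c}})$ grows to roughly $2q^m$ where $m=v(\alpha-\beta)$, and the trace of a ramified principal series type can correspondingly exceed $2$. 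This is precisely why the lemma only asserts a bound depending on $g$ in this case. The paper's argument is different in substance: it passes to an extension $F'$ containing the eigenvalues, diagonalizes $g$, and shows by explicit computation that the fixed points of $g$ on $\mathbf{P}^1(\OO_{F'}/\gp_{F'}^{\mathfrak{c}})$ (and likewise on the coset spaces $\mathcal{H}_n$ in the supercuspidal cases) are the points $[x:1]$ and $[1:y]$ with $x,y$ of valuation at least $\mathfrak{c}-m$, so their number stabilizes at roughly $2q_{F'}^m$ as $\mathfrak{c}\to\infty$. You need some such direct stabilization argument; the twisting reduction cannot substitute for it.
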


\begin{proof}  We proceed by taking cases with respect to the structure of $\tau$.  If $\tau$ is one-dimensional, the inequality of the lemma is obvious.  If $\tau$ is special, then up to twisting it is the inflation of the Steinberg representation $\St_{\GL(2,\mathbf{F}_q)}$.   If the characteristic polynomial of $g\in\GL(2,\OO_F)$ has unit discriminant then its reduction $\overline{g}\in\GL(2,\mathbf{F}_q)$ has distinct eigenvalues.  From the formula
$$\St_{\GL(2,\mathbf{F}_q)}=[\Ind_B^{\GL(2,\mathbf{F}_q)} 1]  - [1],\; B=\set{\tbt{a}{b}{0}{d}},$$
we see easily that $\tr\left(\St_{\GL(2,\mathbf{F}_q)}(\overline{g})\right)$ equals 1 if $\overline{g}$ has eigenvalues in $\mathbf{F}_q^*$ and $-1$ otherwise, thus establishing Eq.~\ref{traceineq}.    Of course we have $\abs{\tr\tau(g))}\leq\dim\tau=q$ no matter what $g$ is.

We now give a more explicit description of $\tau$ in the remaining cases.  In each of these cases $\tau$ is induced from a ``small" representation of a subgroup of $\GL(2,\OO_F)$.  Since we are interested in the trace of $\tau$, the following form of Mackey's theorem will be useful:
\begin{equation}
\label{mackey}
\tr\left(\Ind_H^G\eta\right)(g)=\sum_{[x]\in\left(G/H\right)^g}\tr\eta(x^{-1}gx),
\end{equation}
where the sum ranges over cosets $[x]=xH\in G/H$ which are fixed under the left action of $g$.  We will be applying this equation to various finite-index subgroups $J^0\subset \GL(2,\OO_F)$.

Suppose $\tau$ is the type of a ramified principal series representation.   By replacing $\tau$ with a twist we may assume, as in the previous subsection, that $\tau$ is induced from the character $\tbt{a}{b}{c}{d}\mapsto\chi(a)$ of $\Gamma_0(\gp^{\mathfrak{c}})$.    Note that we can identify the quotient $\GL(2,\OO_F)/\Gamma_0(\gp^{\mathfrak{c}})$ with the projective line $\mathbf{P}^1(\OO_F/\gp^{\mathfrak{c}})$, together with its natural action of $\GL(2,\OO_F)$.   If $\overline{g}$ has distinct eigenvalues then $g$ only has zero or two fixed points on $\mathbf{P}^1(\OO_F/\gp^{\mathfrak{c}})$ and therefore $\abs{\tr\tau(g)}\leq 2$ in this case.  If we lift this assumption and merely assume that $g$ has distinct eigenvalues in $\overline{F}^*$, then $g$ may have more than two fixed points on $\mathbf{P}^1(\OO_F/\gp^{\mathfrak{c}})$.  However, we claim that as$\mathfrak{c}\to\infty$ then this number of fixed points remains bounded.  To establish this, we may pass from $F$ to an extension $F'$ containing the eigenvalues of $g$ and then show that $g$ has a bounded number of fixed points on $\mathbf{P}^1(\OO_{F'}/\gp_{F'}^{\mathfrak{c}})$ as $c\to\infty$.  Working over $F'$, we have up to conjugacy $g=\tbt{\alpha}{}{}{\beta}$, with $\alpha,\beta\in\OO_{F'}^*$;  let $m$ be the valuation in $F'$ of $\alpha-\beta$.  Then for $\mathfrak{c}\geq m$, the fixed points of $g$ on $\mathbf{P}^1(\OO_{F'}/\gp_{F'}^{\mathfrak{c}})$ are exactly the points $[x:1]$ and $[1:y]$, where $x$ and $y$ range through those elements of $\OO_{F'}/\gp^{\mathfrak{c}}$ of valuation at least $c-m$.  Thus $g$ has at most $2(\#\OO_{F'}/\gp_{F'})^m$ fixed points which are $F$-rational, and this number is bounded as $\mathfrak{c}\to\infty$, proving the claim.  Therefore $\abs{\tr\tau(g)}$ is bounded as $\tau$ ranges through all types of ramified principal series representations.

Now suppose $\tau$ is the type of a minimal supercuspidal representation $\pi$.  To prove the required inequality, we need to give a more detailed description of $\tau$.  Here we follow~\cite{Henniart:types}, A.3.2--A.3.8, wherein an exhaustive list of the supercuspidal types is given.  Let $k=\OO_F/\gp_F$, let $\varpi_F$ be a uniformizer of $F$, and let $\psi$ be an additive character of $F$ which vanishes on $\gp_F$ but not on $\OO_F$.  There are three cases to consider:

\begin{enumerate}
\item {\em $\pi$ has conductor $\gp^2$.}In this case $\tau$ is inflated from an irreducible cuspidal representation of $\GL_2(k)$.  These in turn are in correspondence with certain characters $\theta$ of the multiplicative group of a quadratic extension $k_2/k$.  If $\theta$ is a character of $k_2^*$ unequal to its $k$-conjugate, then there is a unique $(q-1)$-dimensional representation
$\tau_\theta$ of $\GL(2,k)$ satisfying
$$\tr\tau_\theta(\overline{g})=\begin{cases}  -(\theta(\alpha_1)+\theta(\alpha_2)),&\text{$\overline{g}$ has eigenvalues $\alpha_1,\alpha_2\in k_2^*\backslash k^*$}\\
0,&\text{$\overline{g}$ has distinct eigenvalues in $k^*$.}\end{cases}$$
Then the type $\tau$ is inflated from one of the representations $\tau_\theta$.  The desired inequality $\abs{\tr\tau(g)}\leq 2$ is obvious from the above description so long as $\overline{g}$ has distinct eigenvalues.

\item {\em $\pi$ has conductor $\gp_F^{\mathfrak{c}}$, $\mathfrak{c}\geq 4$ even.}  Let $E/F$ be the unique unramified quadratic extension field, and let $n=\mathfrak{c}/2$.  Let $b\in E^*$ be of the form $\varpi^{-n}u$, where  $u\in\OO_E^*$ has residue class in $\OO_E/\gp_E$ which generates that field over $k$.  Let $\theta$ be a character of $E^*$ for which $\theta(1+x)=\psi\circ\tr_{E/F}(bx)$ for $b\in\gp_E^{\floor{(n+1)/2}}$.

Choose an embedding $E\injects M_2(F)$ so that $M_2(\OO_F)\cap E = \OO_E$.  Define the subgroup $J^0\subset\GL(2,\OO_F)$ by $$J^0=\OO_E^*\left(1+\gp_F^{\floor{(n+1)/2}}M_2(\OO_F)\right).$$
Then $\tau=\Ind_{J^0}^{\GL(2,\OO_F)}\eta$ for a representation $\eta$ of $J^0$ which we now describe.  If $n$ is odd then $\eta$ is a character of $J^0$ defined by the conditions $\eta\vert_{\OO_E^*}=\theta\vert_{\OO_E^*}$ and $\theta(1+x)=\psi\circ\tr(bx)$ for $x\in\gp_F^{(n+1)/2}M_2(\OO_F)$.  If $n$ is even then $\eta$ is the unique irreducible representation of $J^0$ of dimension $q$ satisfying $\tr\eta(\alpha(1+x))=-\theta(\alpha)$ whenever $x\in\gp_F^{(n+1)/2}$ and $\alpha\in\OO_E^*$ is such that the image of $\alpha$ in $\OO_E/\gp_E$ does not lie in $k$.  We remark that representations $\pi$ of $\GL(2,F)$ containing the type $\tau$ have conductor $\gp_F^{2n}=\gp_F^{\mathfrak{c}}$.

To apply Mackey's theorem we need to consider the coset space
$\GL(2,\OO_F)/J^0$ together with its left $\GL(2,\OO_F)$-action.  If
we let $$\mathcal{H}_n=\set{\alpha\in
\left(\OO_{E}/\gp_{E}^{\floor{(n+1)/2}}\right)^*\biggm{\vert} \alpha\mod\gp_E\not\in k},$$
then there is a natural left action $(g,b)\mapsto g\cdot b$ of
$\GL(2,\OO_F)$ on $\mathcal{H}_c$ via fractional linear
transformations.  If $b\in\mathcal{H}_n$ is a fixed point of $\OO_E^*$ then $g\mapsto g\cdot b$ gives a $\GL(2,\OO_F)$-equivariant bijection
\begin{equation}\label{GmodJ}
\GL(2,\OO_F)/J^0\isom\mathcal{H}_n,\end{equation} much as the
coset space $\GL(2,\R)/O(2,\R)$ is identified with (two copies of)
the upper half plane.  If $g\in\GL(2,\OO_F)$ is such that
$\overline{g}$ has distinct eigenvalues, then $g$ has at most two
fixed points in $\mathcal{H}_n$ and we find $\abs{\tr\tau(g)}\leq
2$.  (In fact if $g$ has eigenvalues $\alpha,\beta$ lying in
$\OO_{F'}$ then
$\tr\tau(g)=(-1)^{n-1}(\theta(\alpha)+\theta(\beta))$.)  On the other
hand if $g$ is merely assumed to have distinct eigenvalues then $g$
may have more than two fixed points on $\mathcal{H}_n$.  However,
the number of fixed points is bounded as $n\to\infty$, by the same argument given in the previous paragraph.  Therefore
$\abs{\tr\tau(g)}$ is bounded as $\tau$ runs through types of
supercuspidal representations of this sort.

\item {\em $\pi$ has conductor $\gp_F^{\mathfrak{c}}$, $\mathfrak{c}\geq 3$ odd.}
  Let $E/F$ be a ramified quadratic extension, and let $n=\mathfrak{c}-2$.  Let $b\in E^*$ have valuation $-n$.  Let $\theta$ be a character of $E^*$ satisfying $\theta(1+x)=\psi\circ\tr_{E/F}(bx)$ whenever $x\in\gp_E^{(n+1)/2}$.

Let $\mathfrak{A}\subset M_2(\OO_F)$ be the algebra $$\mathfrak{A}=\set{\tbt{a}{b}{c}{d}\biggm{\vert} c\in\gp_F},$$
and choose an embedding $E\injects M_2(F)$ in such a way that $\mathfrak{A}\cap E = \OO_E^*$.  Let $P_{\mathfrak{A}}\subset \mathfrak{A}$ be the double-sided ideal of matrices $\tbt{a}{b}{c}{d}$ with $a,c,d\in\gp_F$.
Our subgroup  $J^0$ is then
$$J^0=\OO_E^*\left(1+P_{\mathfrak{A}}^{(n+1)/2}
\right)$$
and $\eta$ is
the character
$\alpha(1+x)\mapsto\theta(\alpha)\psi(\tr(bx))
$ for $\alpha\in\OO_E^*$, $x\in P_{\mathfrak{A}}^{(n+1)/2}$.
Then $\tau=\Ind_{J^0}^{\GL(2,\OO_F)}\eta$ is a type contained in supercuspidal representations $\pi$ of $\GL(2,F)$ of conductor $n+2$.

Now suppose $g\in\GL(2,\OO_F)$ is such that $\overline{g}$ has distinct eigenvalues.  If $\overline{g}$ has irreducible characteristic polynomial, then no conjugate of $g$ can possibly lie in $\mathcal{A}^*$, let alone in $J^0$, so that $\tr\tau(g)=0$.  The alternative is that up to conjugacy $g\in\mathfrak{A}^*$ equals the diagonal matrix with eigenvalues $\alpha,\beta\in\OO_F^*$ whose residue classes are unequal.
Let $\mathcal{H}_n$ be the quotient of the set $\gp^{-1}\backslash\OO_E$ by the group $1+\gp_E^n$.
The analogue of Eq.~\ref{GmodJ} is the $\mathfrak{A}^*$-equivariant bijection $\mathfrak{A}^*/J^0\isom \mathcal{H}_n$.  Since $g$ has no fixed points on $\mathcal{H}_n$ we have $\tr\tau(g)=0$ as well.

Now assume only that $g$ has distinct eigenvalues.  Let $\lambda=\Ind_{J^0}^{\mathfrak{A^*}}\eta$.  Then by Mackey's Theorem $\tr\tau(g)$ is a sum over at most $\#\GL(2,\OO_F)/\mathfrak{A}^*=q+1$ terms of the trace $\tr\lambda$ evaluated on conjugates of $g$.  The same argument from the previous paragraph shows that an element $h\in\mathfrak{A}^*$ with distinct eigenvalues has a bounded number of fixed points on $\mathcal{H}_n$ as $n\to\infty$.  Thus $\abs{\tr\tau(g)}$ is bounded as $\tau$ runs through types of supercuspidal representations of this sort as well.
\end{enumerate}\end{proof}

Lemma~\ref{tracebound} has a global consequence which we will need in the sequel.  Let $K$ be a totally real field and suppose $B/K$ is a quaternion algebra (possibly $M_2(K)$) which is split at all finite places. Suppose $\OO_B\subset B$ is a maximal order.  Then for all finite places $v$ we may identify $\OO_B\otimes_{\OO_F} \OO_{F_v}$ with $\GL(2,\OO_{F_v})$.  Let $g_v$ be the image of $g\otimes 1$ under this isomorphism.  If $\tau=\tau_f\otimes\tau_\infty\in\Types(K)$, let $\tau_f'$ be the representation of $\OO_B^*$ defined by $g\mapsto\prod_{v\text{ finite}} \tau_v(g_v)$.
\begin{lemma}
\label{tracebound2} Let $g\in \OO_B$ be an element whose (reduced) characteristic polynomial has distinct eigenvalues in $\overline{K}^*$.
There is a constant $C$ depending only on $g$ (and of course $K$)
such that for all types $\tau=\tau_f\otimes\tau_\infty\in\Types(K)$,
$\abs{\tr{\tau_f(g)}}\leq C2^{\nu(\tau)-n_{\text{sp}}(\tau)}$, where
$n_{\text{sp}}(\tau)$ is the number of finite places of $K$ at which
$\tau$ is special.
\end{lemma}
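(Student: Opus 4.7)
The plan is to decompose the global trace into local traces and apply Lemma~\ref{tracebound} place-by-place. Since $\tau_f'$ is defined as $\bigotimes_{v\in S_f}\tau_v$ on $\OO_B^*\cong\prod_{v\in S_f}\GL(2,\OO_{F_v})$, the trace factorizes:
\[
|\tr\tau_f'(g)|=\prod_{v\in S_f}|\tr\tau_v(g_v)|.
\]

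First I would isolate a finite, $g$-dependent ``bad'' set. Let $\alpha\neq\beta$ in $\overline{K}^*$ be the two eigenvalues of $g$, and set $\Delta=(\alpha-\beta)^2\in K^*$. Define $S_0=S_0(g)\subset S_f$ to be the (finite) set of finite places at which either $v(\Delta)>0$ or $g_v$ fails to lie in $\GL(2,\OO_{F_v})$. At every $v\notin S_0$ the reduction $\bar g_v\in\GL(2,\mathbf{F}_{q_v})$ has characteristic polynomial with unit discriminant, so the first (sharp) half of Lemma~\ref{tracebound} applies:
\[
|\tr\tau_v(g_v)|\leq\begin{cases}1,&\tau_v\text{ one-dimensional or special,}\\ 2,&\text{otherwise.}\end{cases}
\]

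The ``otherwise'' case occurs precisely at finite places where $\tau_v$ is higher-dimensional but not special (ramified principal series or supercuspidal), and by the definitions of $\nu(\tau)$ and $n_{\text{sp}}(\tau)$ there are at most $\nu(\tau)-n_{\text{sp}}(\tau)$ such places in $S_f$. Consequently the contribution of $v\notin S_0$ to the product is at most $2^{\nu(\tau)-n_{\text{sp}}(\tau)}$. For $v\in S_0$, the image $g_v$ still has distinct eigenvalues in $\overline{K_v}^*$, so the second (uniform) assertion of Lemma~\ref{tracebound} supplies a constant $C_v(g)$ with $|\tr\tau_v(g_v)|\leq C_v(g)$, independent of the type $\tau_v$. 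Taking $C=\prod_{v\in S_0}C_v(g)$ and multiplying all the local estimates gives the claimed bound $|\tr\tau_f'(g)|\leq C\cdot 2^{\nu(\tau)-n_{\text{sp}}(\tau)}$.

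The real difficulty here is not the global argument but the uniform bound at bad places, which is absorbed into the second half of Lemma~\ref{tracebound}: one must show that the fixed-point sets appearing in Mackey's trace formula for the various supercuspidal and ramified principal-series types stay of bounded cardinality as the conductor of $\tau_v$ tends to infinity. Granted that, the passage from local to global is a clean bookkeeping exercise, with the exponent $\nu(\tau)-n_{\text{sp}}(\tau)$ counting exactly those local factors whose contribution to the product can only be bounded by $2$ rather than by $1$.
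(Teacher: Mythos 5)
Your argument is correct and is essentially the paper's own proof: isolate the finite set of places dividing the discriminant of the characteristic polynomial of $g$, invoke the uniform (second) part of Lemma~\ref{tracebound} there to get a $g$-dependent constant, and use the sharp bounds $1$ and $2$ from the first part at all remaining places, with the exponent $\nu(\tau)-n_{\text{sp}}(\tau)$ counting exactly the factors bounded by $2$. No substantive difference from the paper's treatment.
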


\begin{proof} Indeed, if such a $g$ is given then only finitely many
finite places of $K$ will divide the discriminant of the characteristic polynomial of $g$.   Let $S_g$ be the set of such places.   For each $v\in S_g$, the preceding lemma shows that there exists a bound $C_v$ so that for all $\tau_v\in\Types(K_v)$ we have $\abs{\tr\tau_v(g)}\leq C_v$.  Let $C=\prod_{v\in S_g} C_v$.  For every finite place $v\not\in S_g$, Lemma~\ref{traceineq} shows that $\abs{\tr\tau_v(g_v)}\leq 1$ if $\tau_v$ is special or one-dimensional and $\abs{\tr\tau_v(g_v)}\leq 2$ otherwise.  Therefore if $\tau_f$ is the finite part of a global type, we have the inequality $$\abs{\tr\tau_f'(g)}=\prod_{v\text{ finite}} \abs{\tau_v(g_v)} \leq \left(\prod_{v\in S_g} C_v\right) 2^{\nu(\tau)-n_{\text{sp}}(\tau)}$$ as required.

\end{proof}

\section{Proof of Theorem~\ref{maintheorem} in the case of $[K:\Q]$ even}

Assume that $n=[K:\Q]$ is even.  Let $S_f$ and $S_\infty$ denote the
finite and infinite places of $K$, respectively.  Let $\mathbf{A}$
and $\mathbf{A}_f$ denote the adeles and finite adeles of $K$.
Finally, let $D/K$ be the quaternion algebra ramified exactly at
$S_\infty$, and let $G=D^*$ be the inner form of $\GL(2)$ corresponding
to $D$.  The Jacquet-Langlands correspondence $\JL\from \pi'\mapsto
\pi$ puts automorphic representations of $G(\mathbf{A})$ in bijection with those
automorphic representations of $\GL(2,\mathbf{A})$ which are discrete series at
the infinite places, see~\cite{JL}  We will also use the symbol $\JL$ to mean the local Jacquet-Langlands correspondence between representations of $D_v$ and those of $\GL(2,K_v)$ for any particular $v$.

Now suppose $\tau\in\Types(K)$ is a global type for $K$.  Choose a
maximal order $\OO$ of $D$.  For each finite place $v\in S_0$ there
exists an isomorphism $\OO_v\isom GL(2,\OO_{K_v})$, unique up to
conjugacy. Let $\tau'_v$ be the pull-back of $\tau_v$ through any
such isomorphism; this is well-defined up to isomorphism.  Let
$\tau_f'=\bigotimes_{v\in S_f}\tau_v'$;  this is a
 finite-dimensional irreducible representation of $\hat{\OO}^*$.  Let also
 $\tau'_{\infty}=\bigotimes_{v\in S_\infty} \JL^{-1}(\tau_v)$;  this
 is a finite-dimensional representation of $G(K\otimes\R)=\prod_{v\in
 S_\infty} G(K_v)$.  Finally, let $\tau'=\tau'_f\otimes\tau'_\infty$,  this is a
 representation of $G(\hat{\OO}_K\times (K\otimes\R))$.
 We record the relationship
\begin{equation}
\label{dvsdim}
 d(\tau)=\dim\tau'\prod_{v\in
 S_{\text{sp}}}\left(1-\frac{1}{q_v}\right),
\end{equation}
where $S_{\text{sp}}$ is the set of places at which $\tau$ is
special and $q_v$ is the cardinality of the residue field of a
finite place $v$.

Because multiplicity one holds for $\GL(2)$ and for $G$, we see that counting the
automorphic representations of $GL(2,\mathbf{A}_K)$ whose restriction to
$\GL(2,\hat{\OO}_K\times(K\otimes\R))$ contains $\tau$ is the same as counting the automorphic representations of $G(\mathbf{A}_K)$ whose restriction
to $G(\hat{\OO}_K\times(K\otimes\R))$ contains $\tau'$. Write
$\mu(\tau)$ for the number of automorphic representations .  It is not necessarily the
case that $\mu(\tau)$ is the cardinality of $S(\tau)$, the set of
automorphic representations of type $\tau$, due to the possibility
of special components.  We will compute $\#S(\tau)$ in terms of
$\mu(\tau)$ at the end of the section.

To compute $\mu(\tau)$, we first realize it as the multiplicity of
$\tau'_f$ inside a space of automorphic forms of ``weight
$\tau'_\infty$", namely
\begin{equation}
\label{modforms1}
M(\tau_\infty')=\mathcal{C}_{G(K\otimes\R)}\left(G(K)\backslash
G(\mathbf{A}),\tau'_\infty\right),
\end{equation}
the space of functions $f$ on $G(\mathbf{A})$ taking values in the
vector space underlying $\tau'_\infty$ which are
left-$G(K)$-invariant and which satisfy
$f(xg_{\R}^{-1})=\tau_\infty'(g_\R)f(x)$ for $x\in G(\mathbf{A})$,
$g_{\R}\in G(K\otimes \R)$. This space has a left action of
$G(\mathbf{A}_f)$ via $(gf)(x)=f(xg)$; the automorphic
representations of $G$ which are $\tau'_\infty$ at the infinite
places are exactly the irreducible $G(\mathbf{A}_f)$-stable
subrepresentations of $M(\tau_\infty')$. Because an element of
$M(\tau_\infty')$ is determined by its restriction to
$G(\mathbf{A}_f)$, we may rewrite Eq (\ref{modforms1}) as
\begin{equation}
\label{modforms2} M(\tau'_\infty)=\mathcal{C}_{G(K)}\left(
G(\mathbf{A}_f),\tau'_\infty\right);
\end{equation}
that is, the space of functions $f$ on $G(\mathbf{A}_f)$ with values
in $V(\tau'_\infty)$ satisfying $f(kg)=\tau'_\infty(k)f(g)$ for
$k\in G(K)$ and $g\in G(\mathbf{A}_f)$.

Let $T$ be a set of double coset representatives for the finite set
$G(K)\backslash G(\mathbf{A}_f)/\hat{\OO}^*$.  Then $T$ is finite by ~\cite{Vigneras}, Theoreme 5.4.  For each $t\in T$,
there is a corresponding maximal order $\OO_t=t\hat{\OO}t^{-1}\cap
D$.  Then the right-hand side of Eq (\ref{modforms2}) decomposes
into a sum of $\hat{\OO}^*$-stable spaces indexed by $T$:
\begin{eqnarray*}
M(\tau'_\infty) &=&
\bigoplus_{t\in T}\mathcal{C}_{G(K)}\left(G(K)t\hat{\OO}^*,\tau'_\infty\right);
\end{eqnarray*}
with some thought each summand on the right is seen to be isomorphic
as a left $\hat{\OO}^*$-module to
$\Ind_{\OO_t^*}^{\hat{\OO}^*}(\tau_\infty')^\vee$, where $\OO_t^*$
is to be regarded as a subgroup of $\hat{\OO}^*$ via conjugation by
$t$, and $(\tau_\infty')^\vee$ is the contragradient of
$\tau_\infty'$.


By Frobenius reciprocity, the desired multiplicity $\mu(\tau)$ is
therefore a sum of terms
$$\mu(\tau)=\sum_{t\in T}\mu_t,$$ where $\mu_t$ is the inner product
of the $\OO_t^*$-modules $\tau_f'\vert_{\OO_t^*}$ and
$(\tau'_\infty)^\vee\vert_{\OO_t^*}$.  That is, $\mu_t$ is the
multiplicity of the trivial character in the restriction of
$\tau'=\tau'_f\otimes\tau'_\infty$ to $\OO_t^*$. By
Eq.~\ref{condition2}, $\tau$ is trivial on $\OO_K^*$;  it therefore
factors through the finite group $W_t=\OO_t^*/\OO_K^*$. Let $e_t$ be
the order of $W_t$. Then

\begin{equation}
\label{mut} \mu_t=\frac{1}{e_t}\sum_{\omega\in W_t}\tr \tau'(\omega)
\end{equation}

We claim that the term with $\omega=1$ dominates the sum in Eq.
(\ref{mut}). Indeed, suppose $\omega\in\OO_t^*$ is {\em outside} of
$\OO_K^*$.  Since $\omega^{e_t}$ belongs to the center of $G(K)$ but $\omega$
itself does not, $\omega$ is semisimple.  Therefore by
Lemma~\ref{tracebound2},
\begin{equation}
\label{firstineq} \abs{\tau_f(\omega)}\leq
C_12^{\nu(\tau)-n_{\text{sp}(\tau)}}
\end{equation}
for a constant $C_1$. {\em A priori} this $C_1$ depends on $\omega$, but
since there are only finitely many $\omega$ under consideration, we may
take $C_1$ to depend only on $K$.

We now turn to the infinite places.  For each infinite place $v$, let $\iota_v\from K_v\isom\R$ be the corresponding isomorphism.  We have that $G(K_v)$ is isomorphic to the group of quaternions $\alpha+\beta j$, with $\alpha,\beta$ complex numbers which are not both zero.  Let $\rho_v\from G(K_v)\to \GL(2,\C)$ be the representation
\begin{equation}
\label{rhov}
\alpha+\beta j\mapsto \tbt{\alpha}{\beta}{-\overline{\beta}}{\overline{\alpha}},
\end{equation} so that the central character of $\rho_v$ is $\iota_v$.  Suppose that $\tau_v$ is the discrete series representation $\mathcal{D}_{k_v,w}$ of $\GL(2,\R)$. (Note that the integer $w$ will not depend on the place $v$.) Then
$$\tau_v'=\left(\iota_v\circ\N_{D_v/K_v}\right)^{1-\frac{k+w}{2}}\otimes\Sym^{k_v-2}\rho_v,$$ where $\N_{D_v/K_v}$ is the reduced norm from $D_v$ to $K_v$;  see~\cite{Carayol:ladicreps2}, 0.10.

 Now suppose that $\omega$ is an element of $\OO_t^*\backslash\OO_K^*$.  Let $\omega_v$ be the image of $\omega$ under $\rho_v$. Since $\omega_v^{e_t}$ is a
scalar but $\omega_v$ is not, the eigenvalues of $\omega_v$ are $\zeta\alpha$
and $\zeta^{-1}\alpha$ for $\zeta\neq\pm 1$ satisfying
$\zeta^{e_t}=1$ and $\alpha^2=\det w_v=\iota\left(\N_{D_v/K_v}\omega\right)$. The trace of
$\Sym^{k-2}\tau_v'(\omega)$ is then
$(\zeta^k+\zeta^{k-2}+\dots+\zeta^{-k})\iota_v\left(\N_{D_v/K_v} \omega\right){(k-2)/2}$. The sum
of roots of unity is bounded by a constant $C_2$ which depends only
on $e_t$, and since $t$ runs over a finite set, this constant may be
taken to depend only on $K$. Therefore for each $v\vert\infty$ we
have
\begin{equation}
\label{realineq} \abs{\tr\tau'_v(\omega)}\leq C_2\iota_v\left(\N_{D_v/K_v}\omega\right)^{-w/2}
\end{equation}
Multiplying Eq.~\ref{realineq} for $v\in S_\infty$ gives
\begin{equation}
\label{secondineq} \abs{\tr\tau'_\infty(\omega)} \leq
C_2^n\left(\prod_{v\vert\infty}\N_{D_v/K_v}\omega\right)^{-w/2}=C_2^n,
\end{equation}
 where in the last step we have used the product formula together with the fact
that the reduced norm of $\omega$ belongs to $\OO_K^*$.  Putting together
Eqs.~\ref{firstineq} and~\ref{secondineq} we find the bound
\begin{equation}
\label{firstbound} \abs{\tr\tau'(\omega)}=\abs{\tr
\tau(\omega)\tr\tau_\infty'(\omega)}\leq
C_1C_2^n2^{\nu(\tau)-n_{\text{sp}}(\tau)}. \end{equation} Applying
Eq. (\ref{firstbound}) to the non-identity elements of the sum in
Eq. (\ref{mut}) gives
\begin{equation}
\label{mutbound}
\mu_t=\frac{1}{e_t}\dim\tau'+O\left(2^{\nu(\tau)-n_{\text{sp}}(\tau)}\right);
\end{equation}
summing this over $t\in T$ gives
\begin{equation}
\label{mubound} \mu(\tau)=\sum_{t\in T} \mu_t = \sum_{t\in
T}\frac{1}{e_t}\dim\tau' +
O\left(2^{\nu(\tau)-n_{\text{sp}}(\tau)}\right)
\end{equation}
Here we apply the ``mass formula" (see~\cite{Vigneras}, p. 142,
Corollaire 2.3):
$$\sum_{t\in T}\frac{1}{e_t} = 2^{1-n}\abs{\zeta_K(-1)}h.$$
Applying this to Eq. (\ref{mubound}) gives
\begin{equation}
\label{mubound2}
\mu(\tau)=2^{1-n}\abs{\zeta_K(-1)}h\dim\tau'+O\left(2^{\nu(\tau)-n_{\text{sp}}(\tau)}\right).
\end{equation}

As mentioned earlier, it is not necessarily true that
$\mu(\tau)=\#S(\tau)$. Indeed, if $v$ is a place at which $\tau_v$
is special, then $\mu(\tau)$ counts automorphic representations
which are principal series as well as special, owing to the
exceptional clause of Theorem~\ref{typetheorem}:  Types inside of
special representations are also contained in principal series
representations.  We adjust for this possibility using the
inclusion-exclusion principle. Let $S_{\text{sp}}$ be the
set of places at which $\tau$ is special. For $v\in S_{\text{sp}}$,
suppose $\tau_v^{\text{ps}}\in\Types(K_v)$ is the type of any
principal series representation whose restriction to
$\GL(2,\OO_{K,v})$ contains $\tau_v$.  Then $\tau_v^{\text{ps}}$ is
nothing but the central character of $\tau_v$.

For each subset $Y$ of $S_{\text{sp}}$, let $\tau^Y\in\Types(K)$
have the same components as $\tau$, but with $\tau_v^{\text{ps}}$ in
place of $\tau_v$ for each $v\in Y$.  Note that for $v\in
S_{\text{sp}}$, we have $\dim\tau_v=q_v$, so that
\begin{equation}
\label{tauy} \dim(\tau^Y)'=\dim\tau'\prod_{v\in Y}q_v^{-1}.
\end{equation}

Letting $\kappa=2^{1-n}\zeta_K(-1)h$, we have the following
expression for $S(\tau)$:

\begin{align*}
S(\tau) & = \sum_{Y\subseteq S_{\text{sp}}}(-1)^{\# Y}\mu(\tau^Y) &  \\
 &= \sum_{Y\subseteq S_{\text{sp}}}(-1)^{\#
Y}\left(\kappa\dim(\tau^Y)'+O\left(2^{\nu(\tau)-\#S_{\text{sp}}}\right)\right)
& \text{ by Eq. (\ref{mubound2}) applied to $\tau^Y$}\\
&= \sum_{Y\subseteq
S_{\text{sp}}}(-1)^{\#Y}\kappa\dim\tau'\prod_{v\in
Y}q_v^{-1}+O(2^{\nu(\tau)}) &\text{by Eq. (\ref{tauy})}\\
&=\kappa\dim\tau'\prod_{v\in S_{\text{sp}}}\left(1-\frac{1}{q_v}\right)+O(2^{\nu(\tau)})&\\
&=\kappa d(\tau)+O\left(2^{\nu(\tau)}\right)&\text{by
Eq.~(\ref{dvsdim})},
\end{align*}
thus completing the proof of Theorem~\ref{maintheorem} when $[K:\Q]$
is even.

\section{Proof in the case of $[K:\Q]$ odd}

\subsection{Shimura Curves}  In this section $n=[K:\Q]$ is odd.  Label the infinite places of $K$
as $v_0,\dots,v_{n-1}$.  Let $D/K$ be a quaternion algebra ramified
exactly at $S_\infty\backslash\set{v_0}$, let $G$ be the inner form
of $\GL(2)_K$ corresponding to $D$, and let $\OO$ be a maximal order
of $D$. Keep the notations $\mathbf{A}$ and $\mathbf{A}_f$ from the
preceding section.  For simplicity we will assume at first that
$K\neq\Q$, and then explain how to modify the proof in the case of
$K=\Q$ at the end of the section.

Let $X=\C\backslash\R$.  For an ideal $N$ of $\OO_K$, the Shimura curve $X_D(N)$ of (full)
level $N$ is the one whose complex points are
$$X_D(N)(\C)=G(K)\backslash\left(X\times G(\mathbf{A}_f)\right)/U_N,$$
where $U_N\subset \hat{\OO}^*$ is the compact
open subgroup consisting of elements congruent to the identity
modulo $N$.  Then $X_D(N)$ admits an action of $\hat{\OO}^*/U_N\isom \GL(2,\OO_K/N)$.

Now suppose $\tau\in\Types(K)$ is a global type. For $0\leq i\leq
n-1$, suppose that $\tau_{v_i}\isom \mathcal{D}_{k_v,w}$, and let $\mathbf{k}=(k_0,\dots,k_{n-1})$. As in the
case of $n$ even, we define a representation $\tau'_f$ of
$\hat{\OO}^*$ as well as a representation $\tau'_\infty$ of
$G(K\otimes\R)$, the only difference being that $\tau'_{v_0}$ is
essentially the same as $\tau_{v_0}$ because $G$ is split at that
place.  Let also $N$ be the level of $\tau'_f$.  As in the previous section, let $\mu(\tau)$ be the number of cuspidal automorphic representations $\pi$ of $G$ containing $\tau'=\tau'_f\otimes\tau'_\infty$.  The strategy is to determine $\mu(\tau)$ by computing the multiplicity of $\tau'$ in the appropriate cohomology group of the Shimura curve $X_D(N)$.

We proceed as in~\cite{Carayol:ladicreps}, 2.1.2, by defining a complex vector bundle $\mathcal{L}/X_D(N)$ analytically by
$$\mathcal{L}=G(K)\backslash(X\times G(\mathbf{A}_f)\times
V)/U_N.$$
Here $V=\bigotimes_{i=0}^{n-1}V_i$ is a certain
representation of $G(K\otimes\R)=\prod_{i=0}^{n-1}G(K_{v_i})$;  for
$i>0$, $V_i$ is $\tau'_v$, while for $i=0$, $V_i$ is a twist of the
$(k-2)$nd symmetric power of the tautological representation of
$G(K_{v_0})=\GL(2,\R)$ on $\C^2$.  Note that the vector bundle $\mathcal{L}$ is equivariant under the action of $\GL(2,\OO/N)$, so that the cohomology $H^1(X_D(N),\mathcal{L})$ admits an action of this group.  We now appeal to~\cite{BorelWallach}, VII, 3.2 (or for our particular application, see~\cite{Carayol:ladicreps}, 2.2.4) to compute the de Rham cohomology of $\mathcal{L}$:
\begin{equation}
\label{LieCoho}
H^1(X_D(N),\mathcal{L})\isom \bigoplus_{\pi} H^1(\mathfrak{g},K_\infty,\pi_\infty\otimes V)\otimes (\pi_f)^{U_N}.
\end{equation}
Here $\mathfrak{g}$ is the Lie algebra of $G(K\otimes\R)$ and $K_\infty\subset G(K\otimes\R)$ is the stabilizer of a point of $X$.  The sum runs over cuspidal automorphic representations $\pi$ of $G$ for which the summand is nonzero.  It follows from~\cite{RogawskiTunnell}, Prop. 1.5,  that $H^1(\mathfrak{g},K_\infty,\pi_\infty\otimes V)$ is zero unless $\pi_\infty\isom\tau'_\infty$, in which case it has dimension 2.  The isomorphism in Eq.~\ref{LieCoho} being $\GL(2,\OO_K/N)$-equivariant, we find that
\begin{equation}
\label{half}
\mu(\tau)=\frac{1}{2}\class{\tau'_f,H^1(X_D(N),\mathcal{L})}_{\GL(2,\OO_K/N)}.
\end{equation}

We now decompose the curve $X_D(N)$ into pieces which are stable
under the action of $(\OO/N\OO)^*$.  Let $T=G(K)\backslash
G(\mathbf{A}_f)/\hat{\OO}^*$.  There is a bijection between $T$ and
the class group $K^*\backslash\mathbf{A}_f^*/\hat{\OO}_K^*$ of $K$,
see~\cite{Vigneras}, Corollaire 5.7, part (i).  We find that
\begin{eqnarray*}
X_D(N)(\C) &=& \coprod_{t\in T} G(K)\backslash (X\times
G(K)t\hat{\OO}^*)/U_N\\
&=&\coprod_{t\in T} \OO_t^*\backslash (X\times\hat{\OO}^*)/U_N,
\end{eqnarray*}
where $\OO_t=t\hat{\OO}t^{-1}\cap D$ is to be considered a subgroup
of $\hat{\OO}^*$ via conjugation by $t$.  Write $X_{D,t}(N)$ for
$\OO_t^*\backslash (X\times\hat{\OO}^*)/U_N$; it is a Riemann
surface with an action of
$\hat{\OO}^*/U_N\isom\GL(2,\OO_K/N)$. Suppose $\Gamma_t(1)$
is the image in $\GL(2,\R)$ of the group of units of $\OO_t^*$ and
that $\Gamma_t(N)\subset\Gamma_t(1)$ arises from the subgroup of
elements congruent to 1 modulo $N$.  It is easy to check that
\begin{equation}
\label{YDt} X_{D,t}(N)=\hat{\OO}^*/U_N
\times_{\Gamma_t(1)/\Gamma_t(N)} \Gamma_t(N)\backslash X.
\end{equation}
>From Eq.~\ref{YDt} we have the isomorphism of
$\hat{\OO}^*/U_N$-modules
$$H^1(X_{D,t},\mathcal{L})=\Ind_{\Gamma_t(1)/\Gamma_t(N)}
H^1(\Gamma_t(N)\backslash X,\mathcal{L});$$ by abuse of notation we
have written $\mathcal{L}$ for the restriction of that vector bundle
to both $X_{D,t}(N)$ and $\Gamma_t(N)\backslash X$.  Let $\mu_t$ be
the multiplicity of $\tau'_f$ inside $H^1(X_{D,t},\mathcal{L})$ as an
$\hat{\OO}^*/U_N$-module, so that by Eq.~\ref{half} $2\mu(\tau)=\sum_{t\in T}\mu_t$.
By Frobenius reciprocity
\begin{equation}
\label{mutauf}
\mu_t=\class{\tau'_f\vert_{\Gamma_t(1)/\Gamma_t(N)},H^1(\Gamma_t(N)\backslash
X,\mathcal{L})}_{\Gamma_t(1)/\Gamma_t(N)}.
\end{equation}

\subsection{Cohomology of discrete groups}

It therefore remains to compute the structure of
$H^1(\Gamma_t(N)\backslash X,\mathcal{L})$ as a
$\Gamma_t(1)/\Gamma_t(N)$-module.  Assume that $N$ is large enough
so that every nonscalar element of $\Gamma_t(N)$ acts without fixed
points on $X$.  As $\Gamma_t(N)\backslash X$ is compact,
$H^1(\Gamma_t(N)\backslash X,\mathcal{L})\isom H^1(\Gamma_t(N),V)$.
It will suffice to compute the Euler characteristic
$$\chi(\Gamma_t(N),V)=\sum_{i=0}^2(-1)^i[H^i(\Gamma_t(N),V)]$$ in the Grothendieck group of
$\Gamma_t(1)/\Gamma_t(N)$.  For this we have the:

\begin{lemma}
\label{fuchsian} Let $\Gamma\subset\GL(2,\R)$ be a discrete
subgroup, acting on $X$ with compact quotient. Let $x_1,\dots,x_r\in X$ be a complete set of $\Gamma$-inequivalent fixed points having fixed subgroups
$\Gamma_1,\dots,\Gamma_r$. Suppose $\Gamma'\subset\Gamma$ is a
normal subgroup acting without fixed points on $X$.  Let
$G=\Gamma/\Gamma'$ and let $Z\subset\Gamma$ be the group of elements
of $\Gamma$ which are scalar. Suppose $V$ is a finite-dimensional
complex vector space admitting an action of $\Gamma$ such that
scalar matrices in $\Gamma'$ act trivially on $V$. Then we have the
equality:
\begin{equation}
\label{fuchsianeq}
\chi(\Gamma',V)=\left(\chi(\Gamma\backslash X)-r\right)\Ind_Z^GV\vert_Z+\sum_{i=1}^r \Ind_{\Gamma_i}^G
V\vert_{\Gamma_i}
\end{equation} in the Grothendieck group of $G$.
\end{lemma}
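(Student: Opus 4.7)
The strategy is to identify $\chi(\Gamma',V)$ with an equivariant Euler characteristic of $Y' := \Gamma'\backslash X$ taking values in the Grothendieck group of $G$, and then to compute this by a $G$-equivariant cell decomposition of $Y'$ adapted to the ramification locus of the branched cover $Y'\to Y := \Gamma\backslash X$.

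First, since $\Gamma'$ acts on $X$ freely modulo scalars, and since scalars in $\Gamma'$ act trivially on $V$ by hypothesis, the representation $V$ determines a well-defined $G$-equivariant local system $\mathcal{L}_V$ on $Y'$, and there is a $G$-equivariant identification
$$
H^i(\Gamma', V) \isom H^i(Y', \mathcal{L}_V)
$$
for each $i$. Hence $\chi(\Gamma', V)$ coincides with the equivariant Euler characteristic $\chi_G(Y',\mathcal{L}_V)$ in the Grothendieck group of $G$.

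Second, I choose a $G$-equivariant CW decomposition of $Y'$ in which the finitely many exceptional fixed orbits (the images in $Y'$ of the $\Gamma$-orbits of $x_1,\dots,x_r$) are realized as isolated $0$-cells. Such a decomposition exists because $Y'\to Y$ is a branched cover whose ramification is concentrated at these $r$ points. On any such decomposition,
$$
\chi_G(Y',\mathcal{L}_V) = \sum_{[\sigma]} (-1)^{\dim\sigma}\,\Ind_{G_\sigma}^G V\vert_{G_\sigma},
$$
where $[\sigma]$ ranges over $G$-orbits of cells of $Y'$ and $G_\sigma$ denotes the stabilizer of $\sigma$ in $G$. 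By construction, $G_\sigma$ equals the image of $Z$ in $G$ for every cell lying over $Y\setminus\{x_1,\dots,x_r\}$ (since the $\Gamma$-action on $X$ is free modulo $Z$ away from the $x_i$), and equals the image of $\Gamma_i$ at the exceptional $0$-cell over $x_i$.

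The formula follows by bookkeeping. The cells lying over $Y\setminus\{x_1,\dots,x_r\}$ pull back from any CW decomposition of $Y\setminus\{x_1,\dots,x_r\}$; their alternating count is $\chi(\Gamma\backslash X)-r$, and each contributes $\Ind_Z^G V\vert_Z$. The $r$ exceptional $0$-cells contribute the $\Ind_{\Gamma_i}^G V\vert_{\Gamma_i}$ terms. I expect the principal obstacle to be the careful handling of the scalar subgroup $Z$: one must check that the action of $Z$ on $V$ factors through the image $Z/(Z\cap\Gamma')$ embedded in $G$, that the corresponding $G$-action on $\mathcal{L}_V$ is well-defined, and that the generic stabilizer on $Y'$ really is this image of $Z$. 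Once those identifications are in hand, additivity of equivariant Euler characteristics delivers \eqref{fuchsianeq} immediately.
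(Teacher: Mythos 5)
Your proposal is correct and is essentially the paper's argument: the paper likewise computes $\chi(\Gamma',V)$ from a $\Gamma$-stable cell decomposition of $X$ having the elliptic points $x_i$ as vertices, identifies the cochains supported on each $\Gamma$-orbit of cells with $\Ind_{\Gamma_i}^{G}V\vert_{\Gamma_i}$ (resp.\ $\Ind_Z^G V\vert_Z$ for generic cells), and takes the alternating sum, yielding the count $\chi(\Gamma\backslash X)-r$ for the generic orbits. Your packaging via a $G$-equivariant local system on $\Gamma'\backslash X$ is the standard translation of the paper's $\Hom_{\Gamma'}(C_i(\mathcal{K}),V)$ and changes nothing of substance.
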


\begin{proof}
We construct a simplicial complex $\mathcal{K}$ with underlying topological
space $X$ in such a way that $\Gamma$ acts on $\mathcal{K}$ and such that
each elliptic fixed point $x_j$ is a vertex of $\mathcal{K}$.  For $i=0,1,2$
let $C_i(\mathcal{K})$ be the free $C$-vector space with basis the $i$-cells
of $K$, and let $N_i=\dim C_i(\mathcal{K})$. Let $C^i(\mathcal{K},V)$ be the space of
$\C[\Gamma']$-module homomorphisms $C_i(\mathcal{K})\to V$.  Then $C^i(\mathcal{K},V)$
carries a left $\Gamma$-module structure: if $\gamma\in\Gamma$ and
$f\in C^i(\mathcal{K},V)$, then $(\gamma f)(x)=\gamma(f(\gamma^{-1}(y)))$. The
action factors through an action of $G$.  It is standard
(see~\cite{Shimura:Automorphic}, Prop. 8.1) that the cohomology of
the complex $$0\to C^0(\mathcal{K},V)\to C^1(\mathcal{K},V)\to C^2(\mathcal{K},V)\to 0$$ agrees
with $H^i(\Gamma',V)$.  Therefore $\chi(\Gamma',V)$ is the
alternating sum of the $C^i(\mathcal{K},V)$ in the Grothendieck ring of $G$.

We wish to compute the structure of $C^0(\mathcal{K},V)$ as an $\C[G]$-module.
Let $x_1,\dots,x_{N_0}$ be a complete set of $\Gamma$-inequivalent
$0$-cells of $\mathcal{K}$, with the fixed points $x_1,\dots,x_r$ as in the
hypothesis.   Writing $W_i$ for the $\C$-span of the $\Gamma$-orbit
of $x_i$, we have a decomposition of $\C[\Gamma]$-modules
$C_0(\mathcal{K})=\bigoplus_{i=1}^{N_0}W_i$ of $C_0(\mathcal{K})$, and therefore
$$C^0(\mathcal{K},V)=\bigoplus_{i=1}^{N_0}\Hom_{\Gamma'}(W_i,V).$$  Note that
$W_i\approx\Ind_{\Gamma_i}^\Gamma 1$ for $i\leq r$, and that
$\Gamma_i=Z$ for $i\geq r+1$. For each $i$, the $\C[G]$-module
$\Hom_{\Gamma'}(W_i,V)$ can be modeled on the space of functions
$\Phi\from\Gamma\to V$ satisfying $\Phi(\gamma'\gamma
k)=\rho(\gamma')\Phi(\gamma)$ whenever $\gamma'\in\Gamma'$,
$\gamma\in\Gamma$, $k\in\Gamma_i$. The action of $G$ is given by
$\overline{g}(\Phi)(\gamma)=\rho(g)\Phi(g^{-1}\gamma)$ whenever
$\overline{g}=\Gamma'g\in G$.

We claim that there is an isomorphism of $G$-modules
$$\Hom_{\Gamma'}(W_i,V)\tilde{\longrightarrow}\Ind_{\Gamma_i}^G V\vert_{\Gamma_i}.$$
The space on the right is modeled on the space of functions
$\Psi\from G\to V$ satisfying
$\Psi(kg\Gamma')=\rho(k)\Psi(g\Gamma')$ for all $k\in \Gamma_i$,
$g\Gamma\in G$.  Choose a set of representatives $g_1,\dots,g_s$ for
$G/\Gamma_i=\Gamma'\backslash\Gamma/\Gamma_i$;  it is easily checked
that an isomorphism is given by $\Phi\mapsto\Psi$, where
$\Psi(\Gamma'g_j)=g_j\Phi(g_j^{-1})$.

Thus in the Grothendieck group of $G$ we have

\begin{eqnarray*}
C^0(\mathcal{K},V)&=&\sum_{i=1}^{N_0}\Ind_{\Gamma_i}^G V\vert_{\Gamma_i} \\
&=& (N_0-r)\Ind_Z^G V\vert_Z +\sum_{i=1}^r\Ind_{\Gamma_i}^G
V\vert_{\Gamma_i}.
\end{eqnarray*}

A similar calculation holds for $C^1(\mathcal{K},V)$ and $C^2(\mathcal{K},V)$, except
that there are no cells fixed by elements of $\Gamma$.   Therefore:

\begin{eqnarray*}
C^1(\mathcal{K},V)&=& N_1\Ind_Z^G V\\
C^2(\mathcal{K},V)&=& N_2\Ind_Z^G V
\end{eqnarray*}

Taking the alternating sum of the $C^i(\mathcal{K},V)$ gives the expression in
the Lemma.
\end{proof}

We apply Lemma~\ref{fuchsian} to the groups
$\Gamma_t(N)\subset\Gamma_t(1)$ and to the representation $V$ of
$\Gamma_t(1)$.  Keep the notations $\Gamma_i$, $x_i$, and $r$ from
the lemma.  Since the multiplicity of $\tau$ in $H^0$ and $H^2$ is
bounded, the multiplicity of $\tau$ in $\bigoplus_{t\in
T}\chi(\Gamma_t(N),V)$ is $-\sum_{t\in T}\mu_t=-2\mu(\tau)$ to
within an error term depending only on $K$. From Eq. (\ref{mutauf})
we have
\begin{equation}
-\mu_t=\left(\chi(X_t(1))-r\right)\dim
V\dim\tau'_f+\sum_{i=1}^r\class{\tau'_f\vert_{\Gamma_i},V\vert_{\Gamma_i}}+O(1)
\end{equation}
Let $W_i=\Gamma_i/Z$ have order $e_i$.  Since $\tau_f'$ and $V$ have
the same values on $Z=\OO_K^*$, the summand on the right is
$$\class{\tau'_f\vert_{\Gamma_i},V\vert_{\Gamma_i}}=\frac{1}{e_i}\sum_{w\in
W_i}\tr\tau'_f(w)\overline{\tr(w\vert V)}.$$  By an argument of the
same sort as in the previous section, this quantity is
$\frac{1}{e_i}\dim \tau'_f\dim
V+O\left(2^{\nu(\tau)-n_{\text{sp}}(\tau)}\right)$. Therefore

$$-2\mu_t=\left[\chi(X_h(1))-r+\sum_{i=1}^r
\frac{1}{e_i}\right]\dim\tau'_f\dim V
+O\left(2^{\nu(\tau)-n_{\text{sp}}(\tau)}\right).$$ The expression
in square brackets is
\begin{eqnarray*}
\chi(X_t(1))-\sum_{i=1}^r\left(1-\frac{1}{e_i}\right) &=&
-\frac{1}{2\pi}\vol(\Gamma_t(1)\backslash X)\\
&=&-\abs{\zeta_K(-1)}2^{2-n}
\end{eqnarray*}
from~\cite{Vigneras}, p. 109, Exemple 5, together with p. 119, Prop.
2.10.  Since $2\mu(\tau)=\sum_{t\in T}\mu_t$ and $\#T = h$, the
argument continues exactly as in the previous section.

\subsection{The case of $K=\Q$.} \label{Q} Now suppose $K=\Q$. Let
$\tau=\tau_f\otimes\tau_\infty\in\Types(\Q)$.  Let $N$ be the level
of $\tau$, considered as a rational integer, and suppose the infinite component of $\tau$ is $\tau_\infty=D_{k,w}$.  .  Assume $N\geq 22$.  We note that condition (2)
in the definition of global types reduces to the condition that the
central character of $\tau_f$ take the value $(-1)^k$ at $-1$.

Let $\mu(\tau)$ be the number of cuspidal automorphic representations $\pi$ of $\GL(2,\mathbf{A}_\Q)$ containing $\tau=\tau_f\otimes\tau_\infty$.  The analogue of Eq.~\ref{LieCoho} in the case of $K=\Q$ is found in~\cite{LanglandsAntwerp}, Thm. 2.10.  The result is the same except that parabolic cohomology must be used.  Let $Y_D(N)$ be the (non-complete) Shimura curve for the split algebra $D=M_2(\Q)$.  Let $\mathcal{L}/Y_D(N)$ be the vector bundle corresponding to $\tau_\infty$;  then
\begin{equation}
\label{halfQ}
\mu(\tau)=\frac{1}{2}\class{\tau_f,H^1_P(X_D(N),\mathcal{L})}_{\GL(2,\Z/N\Z)}.
\end{equation}  The curve $Y_D(N)$ has connected components, each of which is the classical modular curve $Y(N)$:    $$Y_D(N)=\GL(2,\Z/N\Z)\times_{\SL(2,\Z/N\Z)} Y(N).$$  Therefore $$\mu(\tau)=\frac{1}{2}\class{\tau_f\vert_{\SL(2,\Z/N\Z)},H^1_P(Y(N),\mathcal{L})}.$$

Our goal is therefore to determine the class of $H^1_P(Y(N),\mathcal{L})$ in the Grothendieck group of $\SL(2,\Z/N\Z)$.  Since $N>1$, the group $\Gamma(N)$ acts on the upper half-pane $\mathcal{H}$ without fixed points and we have the $S_n$-equivariant isomorphism
\begin{equation}
H^1_P(Y(N),\mathcal{L})\isom H^1_P(\Gamma(N),V_k),
\end{equation}
where $V_k=\Sym^{k-2}V_3$ is the $(k-2)$nd symmetric power of the tautological representation  of $\SL(2,\Z)$ on $\C^2$.  (For the definition of parabolic cohomology of a Fuchsian group, see~\cite{Shimura:Automorphic}, 8.1.)

We now compute the $S_N$-module $H^1_P(\Gamma(N),V_k)$.  The calculation hinges on the geometry of the Galois cover
$X(N)\to X(1)$ of (complete) modular curves.  Let $S_N=\SL(2,\Z/N\Z)$; then
the Galois group of this cover is $S_N/\set{\pm I}$.  The cover is
branched over three points in $X(1)$, namely the images in $X(1)$ of
the points $\rho=e^{2\pi i/3}$, $i$, and $\infty$ of the upper half
plane $\mathcal{H}$.  Those points have the following stabilizers in $\SL(2,\Z)$:
\begin{eqnarray*}
\Gamma_\rho&=&\class{\tbt{0}{-1}{1}{1}}\\
\Gamma_i&=&\class{\tbt{0}{-1}{1}{0}}\\
\Gamma_\infty &=&\class{\pm\tbt{1}{1}{}{1}}.
\end{eqnarray*}
For $j\in\set{\rho,i,\infty}$, let $\overline{\Gamma}_j$ be the
image of $\Gamma_j$ in $S_N$.   The center of $\SL(2,\Z)$ is
$Z=\set{\pm 1}$; let $\overline{Z}$ be its image in $S_N$.   Since
$N>2$, the reduction map $Z\to\overline{Z}$ is an isomomorphism, and
therefore any $Z$-module can be considered an $\overline{Z}$-module.
The same is true for the groups $\Gamma_j$ and
$\overline{\Gamma}_j$.

To compute the structure of $H^1_P(\Gamma(N),V_k)$ as an $S_N$-module, we must modify Lemma~\ref{fuchsian} to include a term coming from the
unique cusp of $X(1)$.  Let $\chi_P(\Gamma(N),V_k)$ be the alternating sum of the parabolic cohomology groups $H^i_P(\Gamma(N),V_k)$ in the Grothendieck group of $S_N$.  Let $\sgn$ denote the obvious character of $\overline{Z}$ and of $\overline{\Gamma}_\infty$.

\begin{lemma}
\label{fuchsianQ}
$$\chi_P(\Gamma(N),V_k)=-\left[\Ind_{\overline{Z}}^{S_N} V_k\vert_Z\right] +
\left[\Ind_{\overline{\Gamma}_\rho}^{S_N}
V_k\vert_{\Gamma_\rho}\right]
+\left[\Ind_{\overline{\Gamma}_i}^{S_N} V_k\vert_{\Gamma_i}\right]
+ \left[\Ind_{\overline{\Gamma}_\infty}^{S_N} \sgn^k\right]$$
\end{lemma}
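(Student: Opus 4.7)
The plan is to adapt the proof of Lemma~\ref{fuchsian} to the current situation of $\Gamma' = \Gamma(N)$ inside $\Gamma(1) = \SL(2,\Z)$ acting on the extended upper half-plane $\mathcal{H}^* = \mathcal{H} \cup \mathbf{P}^1(\mathbf{Q})$, whose quotient $X(1)$ is the complete modular curve of genus zero (so $\chi(X(1)) = 2$).  First I would choose a $\Gamma(1)$-equivariant triangulation $\mathcal{K}$ of $\mathcal{H}^*$ in which $\rho$, $i$, and $\infty$ appear as $0$-cells with stabilizers $\Gamma_\rho$, $\Gamma_i$, and $\Gamma_\infty$ respectively, every other $0$-cell having stabilizer equal to the center $Z = \{\pm I\}$, and every higher-dimensional cell being stabilizer-free modulo $Z$.

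The simplicial calculation in the proof of Lemma~\ref{fuchsian} then applies essentially verbatim.  The class of $C^0(\mathcal{K}, V_k)$ in the Grothendieck group of $S_N$ works out to
\begin{equation*}
(N_0 - 3)\Ind_{\overline{Z}}^{S_N} V_k|_Z + \Ind_{\overline{\Gamma}_\rho}^{S_N} V_k|_{\Gamma_\rho} + \Ind_{\overline{\Gamma}_i}^{S_N} V_k|_{\Gamma_i} + \Ind_{\overline{\Gamma}_\infty}^{S_N} V_k|_{\Gamma_\infty},
\end{equation*}
while $C^1$ and $C^2$ each contribute only multiples of $\Ind_{\overline{Z}}^{S_N} V_k|_Z$, so that in the alternating sum the coefficient of $\Ind_{\overline{Z}}^{S_N} V_k|_Z$ collapses to $\chi(X(1)) - 3 = -1$.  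What this actually computes, however, is the Euler characteristic of the cohomology of $X(N)$ with values in the natural extension of $\mathcal{L}$ across the cusps, not of parabolic cohomology.

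The remaining task is to correct the cusp contribution from $\Ind V_k|_{\Gamma_\infty}$ to $\Ind \sgn^k$.  Since the generator $T = \tbt{1}{1}{0}{1}$ of $\Gamma_\infty/Z$ acts on $V_k$ through a single unipotent Jordan block, both $V_k^T$ and $V_k/(T-1)V_k$ are one-dimensional, spanned by $e_1^{k-2}$ and its dual respectively; on each, $-I$ acts by $(-1)^{k-2} = (-1)^k$, so they both carry exactly the character $\sgn^k$ of $\overline{\Gamma}_\infty$.  Using the exact sequence
\begin{equation*}
0 \to H^1_P(\Gamma(N), V_k) \to H^1(\Gamma(N), V_k) \to \bigoplus_c H^1(\Gamma(N)_c, V_k)
\end{equation*}
and its analogue for $H^0$, one shows that the transition from the extended Euler characteristic to $\chi_P$ effectively replaces each term $\Ind^{S_N}_{\overline{\Gamma}_\infty} V_k|_{\Gamma_\infty}$ by $\Ind^{S_N}_{\overline{\Gamma}_\infty} \sgn^k$.

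I expect the main obstacle to be the bookkeeping of this cusp correction: one must verify that, after taking the alternating sum of $H^0$, $H^1$, $H^2$ of the cusp links in the Grothendieck group of $\overline{\Gamma}_\infty$, the intermediate Jordan blocks of $V_k$ under $T$ cancel and leave exactly $\sgn^k$ behind.  Combining this cancellation with the simplicial calculation above then gives the four-term expression in the statement.
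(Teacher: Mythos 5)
Your overall strategy (a simplicial Euler-characteristic computation \`a la Lemma~\ref{fuchsian}, plus a cusp correction) is the right one, but there is a genuine gap at the cusp, and it occurs at the very first step. If you triangulate $\mathcal{H}^*=\mathcal{H}\cup\mathbf{P}^1(\Q)$ with $\infty$ as a vertex, then $\Gamma(N)$ does \emph{not} act freely (mod center): the stabilizer of $\infty$ in $\Gamma(N)$ is the infinite cyclic group generated by $u^N$, $u=\left(\begin{smallmatrix}1&1\\0&1\end{smallmatrix}\right)$. Consequently the argument of Lemma~\ref{fuchsian} does not apply ``essentially verbatim'': the cochain complex $\Hom_{\Gamma(N)}(C_\bullet(\mathcal{K}),V_k)$ no longer computes $H^*(\Gamma(N),V_k)$ (nor, without further argument, any named cohomology of $X(N)$), and the identification $\Hom_{\Gamma(N)}(W_\infty,V_k)\isom\Ind_{\overline{\Gamma}_\infty}^{S_N}V_k\vert_{\Gamma_\infty}$ fails --- a $\Gamma(N)$-homomorphism out of the orbit of the cusp vertex must take values in the $u^N$-invariants, so this Hom space is $\Ind_{\overline{\Gamma}_\infty}^{S_N}V_k^{u}\isom\Ind_{\overline{\Gamma}_\infty}^{S_N}\sgn^k$, not the $(k-1)$-dimensional induction you wrote. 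Your subsequent ``correction'' step is therefore repairing a term that was already misidentified, and the step you flag as ``the main obstacle'' --- showing that in the alternating sum only the one-dimensional piece $\sgn^k$ survives at the cusp --- is not bookkeeping: it is the entire content of the lemma, and you have asserted rather than proved it. In particular, the exact sequence $0\to H^1_P\to H^1(\Gamma(N),V_k)\to\bigoplus_c H^1(\Gamma(N)_c,V_k)$ by itself does not determine $[H^1_P]$ in the Grothendieck group unless you also control the image of the restriction map equivariantly.

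For comparison, the paper's proof sidesteps the non-free action entirely: it deletes horoball neighborhoods of the cusps, works with the truncated surface $\mathcal{H}^o$ on which $\Gamma(N)$ acts freely, and builds the parabolic condition directly into the $1$-cochains. The boundary circle over the cusp is the cycle $\gamma=\sum_{j=0}^{N-1}u^jt_1$, and the subspace $C^1_P\subset C^1$ of cochains sending $g\gamma$ into $(gu^Ng^{-1}-1)V_k$ sits in the short exact sequence~(\ref{SESgamma}), whence $[C^1_P(W_1,V_k)]=[\Ind_{\overline{Z}}^{S_N}V_k\vert_Z]-[\Ind_{\overline{\Gamma}_\infty}^{S_N}\sgn^k]$ using $[V_k/(u-1)V_k]=[\sgn^k]$ (the one correct and essential identity your proposal does contain). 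This is where the $+[\Ind_{\overline{\Gamma}_\infty}^{S_N}\sgn^k]$ in the statement actually comes from; the remaining terms then follow from Lemma~\ref{fuchsian} with $\chi(\SL(2,\Z)\backslash\mathcal{H}^o)=1$ and $r=2$. To salvage your route you would need either to prove that your $\mathcal{H}^*$-complex computes $H^*(X(N),j_*\mathcal{L})$ and that this agrees with parabolic cohomology, or to carry out the cusp-link cancellation you deferred; as written, neither is done.
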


\begin{proof} Remove a small open disc containing $\infty$ from the projective line $X(1)$, and let $\mathcal{H}^o$ be the preimage of the result.  This can be accomplished by removing from $\mathcal{H}$ all the $\SL(2,\Z)$-translates of the region $\set{x+iy\vert y\geq y_0}$ for $y_0$ large enough. Construct an $\SL(2,\Z)$-stable simplicial complex $\mathcal{K}$ whose underlying topological space is $\mathcal{H}^o$ such that $i$ and $\rho$ are vertices of $\mathcal{K}$.  Let $u=\tbt{1}{1}{0}{1}$, so that $\Gamma_\infty\cap\Gamma(N)$ is generated by $u^N$.  Assume there is a 1-cell $t_1$ in $\mathcal{K}$ for which $\partial t_1$ is of the form $(u-1)z$ for a vertex $z$ of $\mathcal{K}$.  Then the 1-chain $\gamma:=\sum_{j=0}^{N-1} u^jt_1$ has boundary $(u^N-1)z$.  The boundary of the quotient complex $\Gamma(N)\backslash\mathcal{K}$ is exactly the set of translates $g\gamma$, where $g$ runs over a set of coset representatives for $\Gamma(N)\backslash\SL(2,\Z)$.  Note that $\partial(g\gamma)=(gu^Ng^{-1}-1)gz$.    Let $C_P^1(\mathcal{K},V_k)\subset C^1(\mathcal{K},V_k)$ be the space of $\C[\Gamma(N)]$-module homomorphisms $C_1(\mathcal{K})\to V_k$ mapping the 1-cycle $g\gamma$ into $(gu^Ng^{-1}-1)V_k$ for each $g\in\SL(2,\Z)$.  Then the complex $$1\to C^0(\mathcal{K},V_k)\to C^1_P(\mathcal{K},V_k)\to C^2(\mathcal{K},V_k)\to 1$$ has cohomology $H^*(\Gamma(N),V_k)$ ({\em loc. cit.}, Prop. 8.1).  We remark that $(u^N-1)V_k=(u-1)V_k$ has codimension 1 in $V_k$.

Let $t_1,t_2,\dots,t_{N_1}$ be a complete set of $\SL(2,\Z)$-inequivalent 1-cells of $\mathcal{K}$.  Writing $W_j$ for the $\C$-span of the $\SL(2,\Z)$-orbit of $t_j$, we have an isomorphism of $\SL(2,\Z)$-modules $W_j\isom \Ind_{Z}^{\SL(2,\Z)} 1$.  Let $C^1_P(W_j,V_k)\subset C^1_P(\mathcal{K},V_k)$ be the $\SL(2,\Z)$-submodule of cocycles supported on $W_j$, so that $C_1^P(\mathcal{K},V_k)=\bigoplus_{j=1}^{N_1}C^1_P(W_j,V_k)$.  Then for $j>1$ we have
$$
[C^P(W_j,V_k)]=\left[\Hom_{\Gamma(N)}(W_j,V_k)\right]=\left[\Ind_{\overline{Z}}^{S_N} V_k\vert_Z \right]
$$
as classes in the Grothendieck group of ${S_N}$.  On the other hand for $j=1$ we have the exact sequence of ${S_N}$-modules
\begin{equation}
\label{SESgamma}
0\to C^1_P(W_1,V_k) \to \Hom_{\Gamma(N)}(W_j,V_k) \stackrel{\xi}{\to} \Hom_{\overline{\Gamma}_\infty}(\C[{S_N}],V_k/(u-1)V_k) \to 0
\end{equation}
where $\xi(f)\from\C[{S_N}]\to V_k/(u-1)V_k$ is defined by $\xi(f)(\overline{g})=g^{-1}f(g\gamma)+(u-1)V_k$.  The homomorphism $\xi(f)$ is $\overline{\Gamma}_\infty$-equivariant because
\begin{eqnarray*}
\xi(f)(gu)&=&u^{-1}g^{-1}f(gu\gamma)\\
&=& u^{-1} g^{-1}f(gu\gamma) \\
&=& u^{-1} g^{-1}f(g(u^N-1)t_1 + g\gamma)\\
&=& u^{-1} g^{-1} (u^N-1) f(gt_1) + u^{-1}g^{-1} f(g\gamma)\text{ because $u^N-1\in \C[\Gamma(N)]$}\\
&\equiv& g^{-1}f(g\gamma) \equiv \xi(f)(g)\pmod{(u-1)V_k}.
\end{eqnarray*}
Since $[V_k/(u-1)V_k]=[\sgn^k]$ in the Grothendieck group of $\overline{\Gamma}_\infty$, Eq.~\ref{SESgamma} implies that $$[C^1_P(W_1,V_k)] = \left[\Ind_{\overline{Z}}^{S_N} V_k\vert_Z\right] -          \left[\Ind_{\overline{\Gamma}_\infty}^{S_N} \sgn^k\right]$$ and therefore that
$$[C^1_P(\mathcal{K},V_k)]=\bigoplus_{j=1}^{N_1}[C^1_P(W_j,V_k)]=N_1\left[\Ind_{\overline{Z}}^{S_N}V_k\vert_Z\right] -\left[\Ind_{\overline{\Gamma}_\infty}^{S_N}\sgn^k\right]$$ in the Grothendieck group of $S_N$.

Therefore the calculation of $\chi(\Gamma(N),V_k)$ proceeds as in Lemma~\ref{fuchsian} with the only change being that there is a contribution of $\left[\Ind_{\overline{\Gamma}_\infty}^{S_N}\sgn^k\right]$ coming from the space of 1-cochains.  Since $\chi(\SL(2,\Z)\backslash\mathcal{H}^o)=1$ and there are two $\SL(2,\Z)$-orbits of elliptic fixed points, the appropriate modification of Eq.~\ref{fuchsianeq} is
$$
\chi(\Gamma(N),V_k)= -\left[\Ind_{\overline{Z}}^{S_N} V_k\vert_{Z}\right]+\sum_{j\in\set{\rho,i}} \left[\Ind_{\overline{\Gamma_j}}^{S_N} V_k\vert_{\Gamma_j}\right] + \left[\Ind_{\overline{\Gamma}_\infty}^{S_N}\sgn^k\right]
$$ as required.\end{proof}

The relationship between $H^1_P(\Gamma(N),V_k)$ and $\chi_P(\Gamma(N),V_k)$ is given by
\begin{equation}
\label{Hversuschi}
\left[H^1_P(\Gamma(N),V_k)\right]=\begin{cases} -\chi_P(\Gamma(N),V_k)-2[1],& k=2,\\
-\chi_P(\Gamma(N),V_k),& k>2,\end{cases}
\end{equation}
the reason being that both $H^0_P(\Gamma(N),V_k)$ and $H^2_P(\Gamma(N),V_k)$ are one-dimensional if
$k=2$ and vanish if $k>2$.
We analyze each of the three terms appearing on the right-hand side
of Eq.~\ref{fuchsianQ}.   For the first term, note that
$V_k\vert_{Z}$ is simply $k-1$ copies of the sign character $\sgn^k$
of $Z$, so that
\begin{equation}
\label{term1} \left[\Ind_{\overline{Z}}^{S_N}
V_k\vert_{Z}\right]=(k-1)\left[\Ind_{\overline{Z}}^{S_N}
\sgn^k\right].
\end{equation}
For the second term of Eq.~\ref{fuchsianQ}, we have
$V_3\vert_{\Gamma_\rho}=\chi_\rho\oplus\chi^{-1}_\rho$, where
$\chi_\rho\from\Gamma_\rho\to \C^*$ is the character
$\tbt{}{-1}{1}{1}\mapsto e^{2\pi i/6}$.  Then
\begin{equation}
\label{Gamma0}
V_k\vert_{\Gamma_\rho}=\Sym^{k-2}V_3\vert_{\Gamma_\rho}=\chi_\rho^{k-2}\oplus\chi_\rho^{k-4}\oplus\dots\oplus\chi_\rho^{-k+2}
\end{equation}
Each character of $\Gamma_\rho$ of the same parity as $k$ appears in
the sum about $k/3$ times, so we expect the right side of
Eq.~\ref{Gamma0} to contain about $k/3$ copies of the sum
$\chi_\rho^k\oplus\chi_\rho^{k+2}\oplus\chi_\rho^{k+4}=\Ind_{Z}^{\Gamma_\rho}
\sgn^k$.  More precisely, we have the following relation in the
Grothendieck group of $\Gamma_\rho$:
\begin{equation}
\label{Gamma01}
V_k\vert_{\Gamma_\rho}=\floor{\frac{k}{3}}\left[\Ind_{Z}^{\Gamma_\rho}
\sgn^k\right]+\eps_\rho,
\end{equation}
where the error term is a virtual representation of $\Gamma_\rho$
given by
\begin{equation}
\label{error0}
\eps_\rho=\begin{cases} -[1],&k\equiv 0\pmod{6}\\
0,&k\equiv 1\pmod{6}\\
[1],&k\equiv 2\pmod{6}\\
-[\chi_\rho^3],&k\equiv 3\pmod{6}\\
0,&k\equiv 4\pmod{6}\\
[\chi_\rho^3],&k\equiv 5\pmod{6}.
\end{cases}
\end{equation}
For the third term of Eq.~\ref{fuchsianQ}, let
$\chi_i\from\Gamma_i\to\C^*$ be the character defined by
$\tbt{}{1}{-1}{}\to i$.  The analysis is similar to the case of
$\Gamma_\rho$. We have
\begin{equation}
\label{Gamma11}
V\vert_{\Gamma_i}=\floor{\frac{k}{2}}\left[\Ind_{Z}^{\Gamma_i}
\sgn^k\right]+\eps_i,
\end{equation}
where the error term is $$\eps_i=\begin{cases} -[1],& k\equiv 0 \pmod{4}\\
0,& k\equiv 1\pmod{4}\\ -[\chi_i^2],&k\equiv 2\pmod{4}\\ 0,& k\equiv
3\pmod{4}.\end{cases}$$

Let $f(k)=(k-1)-\floor{\frac{k}{3}}-\floor{\frac{k}{2}}$.
Substituting Eqs.~\ref{term1},~\ref{Gamma01}, and~\ref{Gamma11} into
Eq.~\ref{fuchsianQ} gives:
\begin{Theorem} For $k=2$ we have
\begin{equation}
\label{weight2}
[H^1_P(\Gamma(N),V_2)]-2[1]=\left[\Ind_{\overline{Z}}^{S_N} 1\right]
-\left[\Ind_{\overline{\Gamma}_\rho}^{S_N} 1 \right]
-\left[\Ind_{\overline{\Gamma}_i}^{S_N} 1\right]
-\left[\Ind_{\overline{\Gamma}_\infty}^{S_N} 1\right].
\end{equation}
For weight $k>2$,
\begin{equation}
\label{higherwt}
[H^1_P(\Gamma(N),V_k)]=f(k)\left[\Ind_{\overline{Z}}^{S_N} \sgn^k\right]
-\left[\Ind_{\overline{\Gamma}_\rho}^{S_N} \eps_\rho\right]
-\left[\Ind_{\overline{\Gamma}_i}^{S_N} \eps_i\right]
-\left[\Ind_{\overline{\Gamma}_\infty}^{S_N} \sgn^k\right].
\end{equation}
\end{Theorem}

We may now complete the proof of Theorem~\ref{maintheorem}.  Let $\tau\in\Types(\Q)$ be a type of level $N$ and weight $k$ and let $\mu(\tau)$ be its multiplicity in $H^1_P(\Gamma(N),V_k)$.  We claim that $\mu(\tau)=\frac{1}{6}(k-1)\dim\tau_f+O\left(2^{\nu(\tau)-n_{\text{sp}}(\tau)}\right)$.  (Note that $f(k)\sim (k-1)/6=2\zeta_\Q(-1)(k-1)$.)  The only feature separating this calculation from that of the previous section is the appearance of the term $\Ind_{\overline{\Gamma}_\infty}^{S_N} \sgn^k$.  By the Chinese remainder theorem the multiplicity of $\tau_f$ in this term is a product of local multiplicities $\class{\tau_p\vert_{U_p},1}$, where $U_p\subset\GL(2,\Z_p)$ is the unipotent subgroup.  A case-by-case analysis shows that the local multiplicity at $p$ is 1 if $\tau_p$ is special and at most 2 in any case.  (In fact it is 0 if $\tau_p$ is supercuspidal.)  Therefore the multiplicity of $\tau_f$ in $\Ind_{\overline{\Gamma}_\infty}^{S_N} \sgn^k$ is $O\left(2^{\nu(\tau)-n_{\text{sp}}(\tau)}\right)$.  The claim follows and the calculation proceeds exactly as before.

We remark without proof that $[H^1(\Gamma(N),V_k)]=2[S_k(\Gamma(N))]$, where $S_k(\Gamma(N))$ is the space of cusp forms of weight $k$ for $\Gamma(N)$.

We conclude the section with a table of types $\tau\in\Types(\Q)$ for which $S(\tau)=\emptyset$.  Table~\ref{typeslist} lists configurations of finite components $\tau_p$ together with those weights $k$ of the appropriate parity for which $\tau = \left(\bigotimes_p \tau_p\right)\otimes \mathcal{D}_{k,(-1)^k}$ is a global type without any matching cuspforms.   Unfortunately, the list omits local 2-adic types of conductor $2^n$, where $n\geq 5$ is odd; the presence of extraordinary (non-dihedral) Galois representations of $\Q_2$ in that case complicates matters considerably.  
The list is complete in the sense that any type $\tau\in\Types(\Q)$ with $S(\tau)=\emptyset$ is a twist of one of the listed types, unless that type should include one of the aforementioned 2-adic types.  The notations for local components are given in Table~\ref{legend}.

\thispagestyle{empty}

\begin{table}
\caption{Global inertial types over $\Q$ lacking representation by a cusp form.
For explanation of notation, see Table~\ref{legend}.}
\begin{center}
\begin{tabular}{| l | l | l || l | l | l || l | l | l |}
Cond. & Local Components & $k$ & Cond. & Local Components & $k$ \\ \hline
1 &  & 2,4,6,8,10,14
&
25 & $\SC_{5}(12)$ &  2
\\
2 & $\SP_{2}$ &  2, 4, 6
&
26 & $\SP_{2}$, $\PS_{13}(6)$ &  2
\\
3 & $\SP_{3}$ &  2, 4
&
27 & $\SC_{27}(\sqrt{3},1)$ &  2
\\
 & $\PS_{3}(2)$ &  3, 5
&
28 & $\SC_{2}(3)$, $\SP_{7}$ &  2
\\
4 & $\SC_{2}(3)$ &  2, 4, 8
&
36 & $\SC_{2}(3)$, $\SC_{3}(4)$ &  4
\\
 & $\PS_4$ &  3
&
 & $\SC_{2}(3)$, $\SC_{3}(8)$ &  3
\\
5 & $\SP_{5}$ &  2
&
45 & $\SC_{3}(4)$, $\SP_{5}$ &  2
\\
 & $\PS_{5}(2)$ &  2, 4
&
49 & $\SC_{7}(8)$ &  2
\\
 & $\PS_{5}(4)$ &  3
&
 & $\SC_{7}(24)$ &  2
\\
6 & $\SP_{2}$, $\SP_{3}$ &  2
&
50 & $\SP_{2}$, $\SC_{5}(8)$ &  5
\\
7 & $\SP_{7}$ &  2
&
 & $\SP_{2}$, $\SC_{5}(24)$ &  3
\\
 & $\PS_{7}(3)$ &  2
&
52 & $\SC_{2}(3)$, $\PS_{13}(2)$ &  2
\\
 & $\PS_{7}(6)$ &  3
&
54 & $\SP_{2}$, $\SC_{27}(\sqrt{-3},1)$ &  3
\\
8 & $\PS_8$ &  2
&
60 & $\SC_{2}(3)$, $\SP_{3}$, $\SP_{5}$ &  2
\\
 & $\SC_8$ &  2
&
64 & $\SC_{64}(3)$ &  2
\\
9 & $\SC_{3}(4)$ &  2, 6
&
 & $\SC_{64}(2)$ &  3
\\
 & $\SC_{3}(8)$ &  3
&
72 & $\SC_8$, $\SC_{3}(4)$ &  2
\\
 & $\PS_9$ &  2
&
90 & $\SP_{2}$, $\SC_{3}(4)$, $\PS_{5}(2)$ &  2
\\
10 & $\SP_{2}$, $\SP_{5}$ &  2
&
 & $\SP_{2}$, $\SC_{3}(8)$, $\SP_{5}$ &  3
\\
 & $\SP_{2}$, $\PS_{5}(2)$ &  2
&
98 & $\SP_{2}$, $\SC_{7}(4)$ &  2
\\
11 & $\PS_{11}(5)$ &  2
&
 & $\SP_{2}$, $\SC_{7}(12)$ &  2
\\
12 & $\SC_{2}(3)$, $\SP_{3}$ &  2, 6
&
100 & $\SC_{2}(3)$, $\SC_{5}(3)$ &  2
\\
13 & $\SP_{13}$ &  2
&
 & $\SC_{2}(3)$, $\SC_{5}(6)$ &  2
\\
 & $\PS_{13}(2)$ &  2
&
 & $\SC_{2}(3)$, $\SC_{5}(8)$ &  3
\\
 & $\PS_{13}(3)$ &  2
&
 & $\SC_{2}(3)$, $\SC_{5}(12)$ &  2
\\
14 & $\SP_{2}$, $\PS_{7}(3)$ &  2
&
108 & $\SC_{2}(3)$, $\SC_{27}(\sqrt{3},1)$ &  2
\\
15 & $\SP_{3}$, $\PS_{5}(2)$ &  2
&
121 & $\SC_{11}(12)$ &  2
\\
17 & $\PS_{17}(2)$ &  2
&
 & $\SC_{11}(60)$ &  2
\\
 & $\PS_{17}(4)$ &  2
&
126 & $\SP_{2}$, $\SC_{3}(4)$, $\SP_{7}$ &  2
\\
18 & $\SP_{2}$, $\SC_{3}(4)$ &  2, 4, 8
&
 & $\SP_{2}$, $\SC_{3}(8)$, $\PS_{7}(2)$ &  2
\\
 & $\SP_{2}$, $\SC_{3}(8)$ &  5
&
135 & $\SC_{27}(\sqrt{-3},1)$, $\SP_{5}$ &  2
\\
19 & $\PS_{19}(3)$ &  2
&
147 & $\SP_{3}$, $\SC_{7}(4)$ &  2
\\
20 & $\SC_{2}(3)$, $\PS_{5}(2)$ &  2
&
 & $\SP_{3}$, $\SC_{7}(12)$ &  2
\\
22 & $\SP_{2}$, $\SP_{11}$ &  2
&
150 & $\SP_{2}$, $\SP_{3}$, $\SC_{5}(3)$ &  2
\\
25 & $\SC_{5}(3)$ &  2
&
 & $\SP_{2}$, $\SP_{3}$, $\SC_{5}(6)$ &  2
\\
 & $\SC_{5}(6)$ &  2
&
 & $\SP_{2}$, $\SP_{3}$, $\SC_{5}(8)$ &  3
\\
 & $\SC_{5}(8)$ &  3
&
 & $\SP_{2}$, $\SP_{3}$, $\SC_{5}(12)$ &  2
\\

\end{tabular}
\end{center}
\label{typeslist}
\end{table}

\begin{table}

\begin{center}
\begin{tabular}{| l | l | l || l | l | l || l | l | l |}
Cond. & Local Components & $k$ & Cond. & Local Components & $k$ \\ \hline
162 & $\SP_{2}$, $\SC_{81}$ & 2
&
578 & $\SP_{2}$, $\SC_{17}(3)$ & 2
\\
180 & $\SC_{2}(3)$, $\SC_{3}(4)$, $\SP_{5}$ & 2
&
 & $\SP_{2}$, $\SC_{17}(6)$ & 2
\\
 & $\SC_{2}(3)$, $\SC_{3}(4)$, $\PS_{5}(2)$ & 2
&
 & $\SP_{2}$, $\SC_{17}(12)$ & 2
\\
192 & $\SC_{64}(1)$, $\SP_{3}$ & 2
&
 & $\SP_{2}$, $\SC_{17}(24)$ & 2
\\
196 & $\SC_{2}(3)$, $\SC_{7}(4)$ & 2
&
 & $\SP_{2}$, $\SC_{17}(48)$ & 2
\\
 & $\SC_{2}(3)$, $\SC_{7}(12)$ & 2
&
588 & $\SC_{2}(3)$, $\SP_{3}$, $\SC_{7}(8)$ & 2
\\
225 & $\SC_{3}(8)$, $\SC_{5}(8)$ & 2
&
 & $\SC_{2}(3)$, $\SP_{3}$, $\SC_{7}(24)$ & 2
\\
234 & $\SP_{2}$, $\SC_{3}(4)$, $\PS_{13}(2)$ & 2
&
675 & $\SC_{27}(\sqrt{-3},-1)$, $\SC_{5}(8)$ & 2
\\
242 & $\SP_{2}$, $\SC_{11}(4)$ & 2
&
726 & $\SP_{2}$, $\SP_{3}$, $\SC_{11}(12)$ & 2
\\
 & $\SP_{2}$, $\SC_{11}(20)$ & 2
&
 & $\SP_{2}$, $\SP_{3}$, $\SC_{11}(60)$ & 2
\\
252 & $\SC_{2}(3)$, $\SC_{3}(4)$, $\SP_{7}$ & 2
&
882 & $\SP_{2}$, $\SC_{3}(4)$, $\SC_{7}(4)$ & 2
\\
256 & $\SC_{256}(0)$ & 2
&
 & $\SP_{2}$, $\SC_{3}(4)$, $\SC_{7}(12)$ & 2
\\
270 & $\SP_{2}$, $\SC_{27}(\sqrt{3},1)$, $\SP_{5}$ & 2
&
900 & $\SC_{2}(3)$, $\SC_{3}(8)$, $\SC_{5}(8)$ & 2
\\
294 & $\SP_{2}$, $\SP_{3}$, $\SC_{7}(8)$ & 2
&
1058 & $\SP_{2}$, $\SC_{23}(4)$ & 2
\\
 & $\SP_{2}$, $\SP_{3}$, $\SC_{7}(24)$ & 2
&
 & $\SP_{2}$, $\SC_{23}(44)$ & 2
\\
320 & $\SC_{64}(0)$, $\SP_{5}$ & 2
&
1089 & $\SC_{3}(4)$, $\SC_{11}(4)$ & 2
\\
324 & $\SC_{2}(3)$, $\SC_{81}$ & 2
&
 & $\SC_{3}(4)$, $\SC_{11}(20)$ & 2
\\
350 & $\SP_{2}$, $\SC_{5}(8)$, $\PS_{7}(2)$ & 2
&
1350 & $\SP_{2}$, $\SC_{27}(\sqrt{-3},1)$, $\SC_{5}(3)$ & 2
\\
378 & $\SP_{2}$, $\SC_{27}(\sqrt{-3},-1)$, $\PS_{7}(2)$ & 2
&
 & $\SP_{2}$, $\SC_{27}(\sqrt{-3},1)$, $\SC_{5}(6)$ & 2
\\
396 & $\SC_{2}(3)$, $\SC_{3}(4)$, $\SP_{11}$ & 2
&
 & $\SP_{2}$, $\SC_{27}(\sqrt{-3},1)$, $\SC_{5}(12)$ & 2
\\
441 & $\SC_{3}(4)$, $\SC_{7}(8)$ & 2
&
 & $\SP_{2}$, $\SC_{27}(\sqrt{-3},-1)$, $\SC_{5}(24)$ & 2
\\
 & $\SC_{3}(4)$, $\SC_{7}(24)$ & 2
&
 & $\SP_{2}$, $\SC_{27}(\sqrt{-3},1)$, $\SC_{5}(8)$ & 2
\\
450 & $\SP_{2}$, $\SC_{3}(4)$, $\SC_{5}(3)$ & 2
&
1452 & $\SC_{2}(3)$, $\SP_{3}$, $\SC_{11}(4)$ & 2
\\
 & $\SP_{2}$, $\SC_{3}(4)$, $\SC_{5}(6)$ & 2
&
 & $\SC_{2}(3)$, $\SP_{3}$, $\SC_{11}(20)$ & 2
\\
 & $\SP_{2}$, $\SC_{3}(4)$, $\SC_{5}(8)$ & 3
&
1600 & $\SC_{64}(1)$, $\SC_{5}(8)$ & 2
\\
 & $\SP_{2}$, $\SC_{3}(4)$, $\SC_{5}(12)$ & 2
&
1728 & $\SC_{64}(0)$, $\SC_{27}(\sqrt{-3},1)$ & 2
\\
 & $\SP_{2}$, $\SC_{3}(8)$, $\SC_{5}(24)$ & 2
&
 & $\SC_{64}(1)$, $\SC_{27}(\sqrt{-3},-1)$ & 2
\\
484 & $\SC_{2}(3)$, $\SC_{11}(3)$ & 2
&
1764 & $\SC_{2}(3)$, $\SC_{3}(4)$, $\SC_{7}(4)$ & 2
\\
 & $\SC_{2}(3)$, $\SC_{11}(6)$ & 2
&
 & $\SC_{2}(3)$, $\SC_{3}(4)$, $\SC_{7}(12)$ & 2
\\
 & $\SC_{2}(3)$, $\SC_{11}(15)$ & 2
&
2178 & $\SP_{2}$, $\SC_{3}(4)$, $\SC_{11}(3)$ & 2
\\
 & $\SC_{2}(3)$, $\SC_{11}(30)$ & 2
&
 & $\SP_{2}$, $\SC_{3}(4)$, $\SC_{11}(6)$ & 2
\\
540 & $\SC_{2}(3)$, $\SC_{27}(\sqrt{-3},1)$, $\SP_{5}$ & 2
&
 & $\SP_{2}$, $\SC_{3}(4)$, $\SC_{11}(15)$ & 2
\\
576 & $\SC_{64}(1)$, $\SC_{3}(4)$ & 2
&
 & $\SP_{2}$, $\SC_{3}(4)$, $\SC_{11}(30)$ & 2
\\
 & $\SC_{64}(2)$, $\SC_{3}(8)$ & 2
& & &

\\

\end{tabular}
\vspace{5cm}
\end{center}
\end{table}

\begin{table}
\footnotesize
\caption{Explanation of symbols appearing as local types for $\Q_p$ in Table~\ref{typeslist}, listed with dimension and conductor.}
\begin{center}
\label{legend}
\begin{tabular}{| c | l | c | c |}
Symbol & Definition & Dimension & Conductor \\
\hline
$\ST_p$ & \parbox{3in}{\rule[-.3cm]{0cm}{.3cm}

Steinberg representation of $\GL(2,\Z/p\Z)$. \rule[-.3cm]{0cm}{.3cm} } & $p$ & $p$ \\


$\PS_p(n)$ & \parbox{3in}{\rule[-.3cm]{0cm}{.3cm}

Principal Series rep. of $\GL(2,\Z/p\Z)$ corresponding to the characters $\varepsilon$ and $1$ of $(\Z/p\Z)^*$, where $\varepsilon$ has order $n$. \rule[-.3cm]{0cm}{.3cm} } & $p+1$ & $p$ \\


$\SC_p(n)$ & \parbox{3in}{\rule[-.3cm]{0cm}{.3cm}

Cuspidal rep. of $\GL(2,\Z/p\Z)$ corresponding to a multiplicative character $\theta$ of order $n$ of a quadratic field extension of $\Z/p\Z$. \rule[-.3cm]{0cm}{.3cm} } & $p-1$ & $p^2$ \\

$\PS_4$ & \parbox{3in}{\rule[-.3cm]{0cm}{.3cm}

Principal Series rep. of $\GL(2,\Z/4\Z)$ corresponding to the unique primitive character of conductor 4. \rule[-.3cm]{0cm}{.3cm} } & $6$ & $4$ \\

$\PS_8$ & \parbox{3in}{\rule[-.3cm]{0cm}{.3cm}

Principal Series rep. of $\GL(2,\Z/8\Z)$ corresponding to the unique even primitive character of conductor 8. \rule[-.3cm]{0cm}{.3cm} } & $12$ & $8$ \\

$\PS_9$ & \parbox{3in}{\rule[-.3cm]{0cm}{.3cm}

Principal Series rep. of $\GL(2,\Z/9\Z)$ corresponding to any even primitive character of conductor 9. \rule[-.3cm]{0cm}{.3cm} } & $12$ & $9$ \\

$\SC_8$ & \parbox{3in}{\rule[-.3cm]{0cm}{.3cm} 

Type belonging to a supercusp. rep. of $\GL(2,\Q_2)$ of conductor 8 with even central character. \rule[-.3cm]{0cm}{.3cm} } & $3$ & $8$  \\

$\SC_{64}(n)$ & \parbox{3in}{\rule[-.3cm]{0cm}{.3cm}

Type belonging to a supercusp. rep. of $\GL(2,\Q_2)$ attached to any mult. character $\theta$ of conductor 8 of the unramified extension $\Q_2(\rho)$, where $\rho$ is a primitive 6th root of 1;  the value of $\theta(\rho)$ is a primitive $n$th root of 1. \rule[-.3cm]{0cm}{.3cm} } & $4$ & $64$ \\

$\SC_{256}(n)$ & \parbox{3in}{\rule[-.3cm]{0cm}{.3cm}

Type belonging to a supercusp. rep. of $\GL(2,\Q_2)$ attached to a mult. character $\theta$ of conductor 16 of the unramified extension $\Q_2(\rho)$;  the value of $\theta(\rho)$ is a primitive $n$th root of 1. \rule[-.3cm]{0cm}{.3cm} } & $8$ & $256$ \\

$\SC_{27}(\sqrt{\pm 3},\iota)$ & \parbox{3in}{\rule[-.3cm]{0cm}{.3cm}

Type belonging to a supercusp. rep. $\pi$ of $\GL(2,\Q_3)$ attached to a character $\theta$ of conductor 9 of $\Q_3(\sqrt{\pm 3})^*$, assuming that the central character of $\pi$ has sign $\iota$.  This translates to the condition $\theta(-1)=-\iota$.  \rule[-.3cm]{0cm}{.3cm}  } & $8$ & $27$ \\

$\SC_{81}$ & \parbox{3in}{\rule[-.3cm]{0cm}{.3cm}

Type belonging to any supercusp. rep. of $\pi$ of conductor 81 whose central character is even.  \rule[-.3cm]{0cm}{.3cm} } & $54$ & $81$
\thispagestyle{empty}
\end{tabular}
\end{center}
\end{table}

\section{The field over which $J_1(p^n)$ is semi-stable}

The existence of modular forms with prescribed ramification behavior
has arithmetic consequences for the Jacobians of Shimura curves. For
simplicity we restrict our attention to the case of $K=\Q$;  we
examine the modular Jacobians $J=J_1(p^n)$ for $p\geq 3$ prime.

In~\cite{Krir} an explicit extension $M$ of $\Q_p^{\text{nr}}$ is
constructed over which $J_0(p^n)$ becomes semi-stable.  The result
of this section is a converse to this sort of theorem, whereby we
construct an explicit extension of $\Q_p^{\text{nr}}$ which contains
any other field over which $J_1(p^n)$ becomes semi-stable.

Following the notations of~\cite{Krir}, let $\Omega_i/\Q_p$,
$i=1,2,3$ be the three quadratic extensions of $\Q_p$, with
$\Omega_1/\Q_p$ unramified.  One realization of this scenario is
$\Omega_1=\Q_p(\sqrt{D})$, $\Omega_2=\Q_p(\sqrt{p})$,
$\Omega_3=\Q_p(\sqrt{Dp})$, where $D\in\Z_p^*$ is a quadratic
nonresidue.  For each $i$ let $\gp_i$ be the maximal ideal of
$\Omega_i$ and let $M_i/\Omega_i^{\text{nr}}$ be the class field
with norm subgroup $U_i$ defined by
$$U_i=\begin{cases} \pm (1+\gp_{i}^{\floor{n/2}}) &i=1\\  1+\gp_{i}^{n-1},&i=2,3. \end{cases}$$   
Finally let $M=M_1M_2M_3\Q_p^{\text{nr}}(\zeta_p^n)$.

Let $A_n$ denote the set of two-dimensional Weil-Deligne
representations $\rho_p$ of the Weil group of $\Q_p$ of conductor
dividing $p^n$ and satisfying $\det\rho_p(-1)=1$.  Then $M$ has the
following interpretation:
\begin{equation}
\label{aboutM} \bigcap_{\rho\in A_n}\ker\rho_p\vert_{I_{\Q_p}}\text{
has fixed field precisely $M$}.
\end{equation}
Indeed, any $\rho_p\in A_n$ has one of the following forms:
\begin{enumerate}
\item decomposable as $\eps_1\oplus\eps_2$, where the $\eps_i$ have conductor dividing $p^n$,
\item $\eps\otimes\Sp(2)$, where $\eps$ has conductor dividing $p^n$, or
\item $\Ind_{\Omega_i/\Q_p}\theta$, where $i\in\set{1,2,3}$.
\end{enumerate}
In the last case, the condition that $\rho_p$ has conductor dividing
$p^n$ translates into the condition that $\theta$ has conductor
$\floor{n/2}$ if $i=1$ and $n-1$ if $i=2,3$, as can be determined from the classification in Section~\ref{tracebounds}.  The condition that
$\det\rho_p(-1)=1$ means that $\theta(-1)=1$ if $i=1$, and
$\theta(-1)=(-1)^{(p-1)/2}$ if $i=2,3$.  For a given $i$, the fixed
field of $\rho_p\vert_{I_{\Q_p}}$ for $\rho_p$ arising from such a
character $\theta$ of $\Omega_i^*$ is exactly $M_i$.  The claim in
Eq.~\ref{aboutM} follows.


\begin{Theorem}  $J$ is semi-stable over $M$.  Conversely, if $p^n$ is any odd prime power other than 3,5,7,9,11,13,17,19,27,49, or 121, then $M$ is the minimal extension of $\Q_p^{\text{nr}}$ over
which $J$ becomes semi-stable.
\end{Theorem}

\begin{proof} The variety $J$ is isogenous to $\prod_fJ_f$, where $f$ runs over Galois
orbits of newforms of conductor dividing $p^n$, and where the
$\ell$-adic Galois representation corresponding to $f$ arises from
the $\ell$-adic Tate module of $J_f$.  The abelian variety $J_f$
becomes semi-stable over an extension $L/\Q_p^{\text{nr}}$ if an
only if the local Weil-Deligne representation $\rho_{f,p}$ attached
to $f$ at $p$ becomes unipotent when restricted to
$\Gal(\overline{\Q}_p/L)$;  see~\cite{Groth:SGA}, exp. IX. Since $\rho_{f,p}$ lies in $A_n$,
Eq.~\ref{aboutM} implies that $J_f$ must become semi-stable over the
field $M$.

For the converse statement, suppose $J$ is semi-stable over
$L\supset K^{\text{nr}}$.
 Let $\rho_p\in A_n$
and assume that no twist of $\rho_p$ is unramified.  Let $\pi_p$ be
the admissible representation of $\GL(2,\Q_p)$ corresponding to
$\rho_p$, and let $\tau_p=\tau(\pi_p)$ be its inertial type. Let
$\tau\in\Types(\Q)$ be a global type of weight 2 whose only nontrivial local component is $\tau_p$. The prime powers listed in the theorem are the only ones which appear as conductors in Table~\ref{typeslist}.   Since $p^n$ is not among these, $S(\tau)$
contains a cusp form $f$.  The assumption on $L$ then implies
$W_L\subset\ker\rho_p$.   Since $\rho_p$ was arbitrary, we may apply
Eq.~\ref{aboutM} to conclude that $L\supset M$, whence the theorem.
\end{proof}

\bibliographystyle{amsalpha}
\bibliography{mybibfile}

\providecommand{\bysame}{\leavevmode\hbox to3em{\hrulefill}\thinspace}
\providecommand{\MR}{\relax\ifhmode\unskip\space\fi MR }
\providecommand{\MRhref}[2]{%
  \href{http://www.ams.org/mathscinet-getitem?mr=#1}{#2}
}
\providecommand{\href}[2]{#2}
\begin{thebibliography}{Kut78b}

\bibitem[BW00]{BorelWallach}
A.~Borel and N.~Wallach, \emph{Continuous cohomology, discrete subgroups, and
  representations of reductive groups}, second ed., Mathematical Surveys and
  Monographs, vol.~67, American Mathematical Society, Providence, RI, 2000.

\bibitem[Car83]{Carayol:ladicreps}
H.~Carayol, \emph{Sur les repr\'esentations $\ell$-adiques attach«ees aux
  formes modulaires de {H}ilbert}, C. R. Acad. Sci. Paris. \textbf{296} (1983),
  no.~15, 629--632.

\bibitem[Car86]{Carayol:ladicreps2}
\bysame, \emph{Sur les repr\'esentations $\ell$-adiques associ\'ees aux formes
  modulaires de {H}ilbert}, Annales scientifiques de l'\'E.N.S. \textbf{19}
  (1986), no.~3, 409--468.

\bibitem[Cas73]{Casselman}
W.~Casselman, \emph{The restriction of a representation of $\text{GL}_2(k)$ to
  $\text{GL}_2(\mathfrak{O})$}, Mathematischen Annalen \textbf{206} (1973),
  no.~4, 311--318.

\bibitem[Che07]{Chenevier}
G.~Chenevier, \emph{On number fields with given ramification}, Compositio
  Mathematica \textbf{143} (2007), no.~6, 1359--1373.

\bibitem[Clo86]{ClozelLimitMultiplicities}
L.~Clozel, \emph{On limit multiplicities of discrete series representations in
  spaces of automorphic forms}, Inventiones Mathematicae \textbf{83} (1986),
  265--284.

\bibitem[CO77]{CohenOesterle}
H.~Cohen and J.~Oesterl{\'e}, \emph{Dimensions des espaces de formes
  modulaires}, Modular functions of one variable, {VI} ({P}roc. {S}econd
  {I}nternat. {C}onf., {U}niv. {B}onn, {B}onn, 1976), Springer, Berlin, 1977,
  pp.~69--78. Lecture Notes in Math., Vol. 627.

\bibitem[DW78]{DeGeorgeWallach}
D.~DeGeorge and N.~Wallach, \emph{Limit formulas for multiplicities in
  ${L}^2({\Gamma}\backslash {G})$}, Ann. Math. \textbf{107} (1978), 133--150.

\bibitem[Gro72]{Groth:SGA}
A.~Grothendieck, \emph{Groupes de monodromie en g\'eom\'etrie alg\'ebrique.
  {I}}, Lecture Notes in Mathematics, Vol. 288, Springer-Verlag, Berlin, 1972,
  S{\'e}minaire de G{\'e}om{\'e}trie Alg{\'e}brique du Bois-Marie 1967--1969
  (SGA 7 I), Dirig{\'e} par A. Grothendieck. Avec la collaboration de M.
  Raynaud et D. S. Rim.

\bibitem[Hen02]{Henniart:types}
G.~Henniart, \emph{Sur l'unicit\'e des types pour $\text{GL}(2)$.}, Duke Math.
  J. \textbf{155} (2002), no.~2, 205--310.

\bibitem[JL70]{JL}
H.~Jacquet and R.~P. Langlands, \emph{Automorphic forms on {${\rm GL}(2)$}},
  Lecture Notes in Mathematics, Vol. 114, Springer-Verlag, Berlin, 1970.

\bibitem[KP96]{khareprasad}
C.S. Khare and D.~Prasad, \emph{Extending local representations to global
  representations}, Kyoto J. of Math. \textbf{36} (1996), 471--480.

\bibitem[Kri96]{Krir}
M.~Krir, \emph{Degr\'e d'une extension de {${\mathbf{Q}_p^{\text{nr}}}$} sur
  laquelle {$J_0(N)$} est semi-stable}, Annales de l'institut Fourier
  \textbf{2} (1996), no.~46, 279--291.

\bibitem[Kut78a]{KutzkoSupercuspidal}
P.~C. Kutzko, \emph{On the supercuspidal representations of {${\rm
  Gl}\sb{2}$}}, Amer. J. Math. \textbf{100} (1978), no.~1, 43--60.

\bibitem[Kut78b]{KutzkoSupercuspidal2}
\bysame, \emph{On the supercuspidal representations of {${\rm Gl}\sb{2}$}.
  {II}}, Amer. J. Math. \textbf{100} (1978), no.~4, 705--716.

\bibitem[Kut80]{Kutzko:1}
P.C. Kutzko, \emph{The {L}anglands conjecture for $\text{GL}_2$ of a local
  field}, Annals of Math. \textbf{112} (1980), 381--412.

\bibitem[Lan73]{LanglandsAntwerp}
R.~P. Langlands, \emph{Modular forms and {$\ell $}-adic representations},
  Modular functions of one variable, {II} ({P}roc. {I}nternat. {S}ummer
  {S}chool, {U}niv. {A}ntwerp, {A}ntwerp, 1972), Springer, Berlin, 1973,
  pp.~361--500. Lecture Notes in Math., Vol. 349. \MR{MR0354617 (50 \#7095)}

\bibitem[Oht83]{Ohta}
Masami Ohta, \emph{On the zeta function of an abelian scheme over the {S}himura
  curve}, Japan. J. Math. (N.S.) \textbf{9} (1983), no.~1, 1--25.

\bibitem[RT83]{RogawskiTunnell}
J.~D. Rogawski and J.~B. Tunnell, \emph{On {A}rtin {$L$}-functions associated
  to {H}ilbert modular forms of weight one}, Invent. Math. \textbf{74} (1983),
  no.~1, 1--42.

\bibitem[Shi71]{Shimura:Automorphic}
G.~Shimura, \emph{Introduction to the arithmetic theory of automorphic forms},
  Princeton University Press, 1971.

\bibitem[Tat79]{tate:ntb}
J.~Tate, \emph{Number theoretic background}, Proc. Symp. Pure Math. \textbf{33,
  part 2} (1979), 3--26.

\bibitem[Tay89]{TaylorGaloisReps}
R.~Taylor, \emph{On galois representations associated to hilbert modular
  forms}, Inventiones Mathematicae \textbf{98} (1989), 265--280.

\bibitem[Vig80]{Vigneras}
M.-F. Vign\'eras, \emph{Arithm\'etique des algebres de quaternions}, Lecture
  Notes in Mathematics, vol. 800, Springer-Verlag, 1980.

\end{thebibliography}

\end{document}